\newtheorem{thm}{Theorem}[section]
\newtheorem{lem}[thm]{Lemma}
\newtheorem{cor}[thm]{Corollary}
\newtheorem{exa}[thm]{Example}
\newtheorem{remark}[thm]{Remark}
\theoremstyle{remark}
\newcommand{\B}[1]{\mathbb #1}
\newcommand{\BZ}{\mathbb{Z}}
\newcommand{\middlearrow}{\lput{:U}{\begin{pspicture}[shift=0](0,0)(0,0)
\psline[arrows=->,arrowscale=1.5](2.2pt,0)(2.3pt,0)\end{pspicture}}}
\begin{document}

\author{Hunki Baek}
\address{Department of Mathematics Education\\ Catholic University of Daegu\\
Gyeongsan, 712-702 Republic of Korea}
\email{hkbaek@cu.ac.kr}

\author{Sejeong Bang}
\address{Department of Mathematics \\Yeungnam University \\ Gyeongsan, 712-749 Republic of Korea}
\email{sjbang@ynu.ac.kr}
\thanks{This research was supported by Basic Science Research Program through the National Research Foundation of Korea(NRF) funded by the Ministry of Education,
Science and Technology (NRF-2011-0013985).}

\author{Dongseok Kim}
\address{Department of Mathematics \\Kyonggi University
\\ Suwon, 443-760 Republic of Korea}
\email{dongseok@kgu.ac.kr}
\thanks{}

\author{Jaeun Lee}
\address{Department of Mathematics \\Yeungnam University \\Gyeongsan, 712-749 Republic of Korea}
\email{julee@yu.ac.kr}
\thanks{}

\subjclass[2013]{05C30 05A17 05C25} \keywords{compositions, aperiodic palindromes, circulant digraphs, circulant graphs}

\title[A bijection between aperiodic palindromes and connected circulant graphs]{A bijection between aperiodic palindromes and connected circulant graphs}

\begin{abstract}
In this paper, we show that there is a one-to-one correspondence between the set of compositions (resp. prime compositions) of $n$ and the set of circulant digraphs (resp.  connected circulant digraphs) of order $n$. We also show that there is a one-to-one correspondence between the set of palindromes (resp. aperiodic palindromes) of $n$ and the set of circulant graphs (resp. connected circulant graphs) of order $n$. As a corollary of this correspondence, we enumerate the number of connected circulant (di)graphs of order $n$.
\end{abstract}

\maketitle

\section{Introduction}
A \emph{composition $\sigma=\sigma_1\sigma_2 \ldots\sigma_m$} of $n$
is an ordered word of one or more positive integers
whose sum is $n$. The number of summands $m$ is called the \emph{number of parts} of the
composition $\sigma$. A composition of $n$ without order gives a partition of $n$. A composition of $n$ with $k$ parts is \emph{aperiodic}
if its period is $k$. In other words, a composition is aperiodic if it is not the concatenation of a proper part of given composition. A composition $\sigma=\sigma_1\sigma_2 \ldots\sigma_m$ is called \emph{prime} if $\gcd(\sigma)=\gcd\{\sigma_1,\ldots,\sigma_m\}=1$. A \emph{numeral palindrome} (or simply, \emph{palindrome}) is a composition in which the summands in given order are the same with those in the reverse order ($i.e.$, $\sigma=\sigma^{-1}$ where $\sigma^{-1}=\sigma_m\ldots \sigma_{2}\sigma_1$ for $\sigma=\sigma_1\sigma_2 \ldots\sigma_m $). It is known in \cite{HCG} that the number of palindromes of $n\geq 2$ is $2^{\lfloor\frac{n}{2} \rfloor}$. The first $30$ palindromes in decimal can be found as the
sequence $A002113$ in OEIS~\cite{OEIS}. Several types of palindromes are studied (see \cite{OEIS}, \cite{wiki}). The number of aperiodic palindromes of $n$ with $k$ parts ($1 \le k\le n$) is studied but the numbers are known only for $n\le 55$ (see OEIS~\cite{OEIS}). However it is still unknown whether there exist infinitely many palindromic primes or not, where a palindromic prime is a positive integer which is prime and also a palindrome. Although palindromes are often considered in the decimal system, the concept of palindromicity can be generalized to the natural numbers in any numeral system. An integer $m>0$ is called \emph{palindromic} in base $b \ge 2$ if it is written in standard notation 
$$m=\sum _{{i=0}}^{k}a_{i}b^{i}~~~~(a_k \ne 0)$$
with $k+1$ digits $a_i~(0\leq i\leq k)$ satisfying $a_i = a_{k-i}$ and $0 \le a_i < b$ for all $0\leq i\leq k$. \\

A \emph{circulant graph} is a graph whose automorphism group
includes a cyclic subgroup which acts transitively on
the vertex set of the graph. For a subset $S\subseteq \BZ_n$ satisfying $S = -S$ $\mod n$, a circulant graph of order $n$ denoted by $G(n,S)$ is a graph with vertex set $\{0, 1, \ldots , n-1\}$ and edge set $E$, where $\{i,j\} \in E$ if and
only if $i\neq j$ and $j-i \in S \mod n$. A \emph{circulant digraph} is also defined without the condition $S=-S$. That is, for a subset $S\subseteq \BZ_n$, a circulant digraph $G(n,S)$ is a digraph with
vertex set $\{0, 1, \ldots , n-1\}$ and arc set $A$, where $(i,j) \in A$ if and
only if $i\neq j$ and $j-i \in S\mod n$.\\ 
\indent Isomorphism problem of circulant graphs had been studied by several authors~(see \cite{AP, Dobson}) and it is completely
solved by Muzychuk~\cite{Muzy2}. In \cite{KKL}, Kim, Kwon and Lee found degree distribution polynomials for the equivalence classes of circulant graphs of several types of order. They also found an enumeration formula for the number of equivalence classes
of circulant graphs and they listed in~\cite[Table 1]{KKL} the degree distribution polynomials
$\Psi_{\BZ_{n}}(x)$ and the number of equivalence classes of circulant 
graphs $\mathcal{E}(\BZ_n)$ for $1\leq n \le 20$. We observe from \cite[Table 1]{KKL} that the number of equivalence classes of circulant 
graphs $\mathcal{E}(\BZ_n)$ is equal to the number of aperiodic palindromes of $n$ for $1\leq n\leq 20$. This leads us to study a coincidence between the set of circulant graphs of order $n$ and the set of aperiodic palindromes of $n$. In this paper, we study the coincidence and we extend this to circulant digraphs of order $n$ and compositions of $n$.   \\

The outline of this paper is as follows. In Section~\ref{correspondence}, we show that there is a one-to-one correspondence between the set of compositions of $n$ and the set of circulant digraphs of order $n$ (see Theorem \ref{dg&c}). In particular, we also show that this bijection in Theorem \ref{dg&c} guarantees a one-to-one correspondence between the set of prime compositions of $n$ and the set of connected circulant digraphs of order $n$ (see Theorem \ref{dicirculantthm}). As an application, we enumerate the number of connected circulant digraphs (i.e., the number of prime compositions), disconnected circulant digraphs and
circulant digraphs of outdegree $k$ (see Corollary \ref{ancounting}). In Section~\ref{undirected}, we first show that there is a one-to-one correspondence between the set of palindromes of $n$ and the set of circulant graphs of order $n$ (see Theorem \ref{palthm}). In particular, we also show that this bijection in Theorem \ref{palthm} guarantees a one-to-one correspondence between the set of aperiodic palindromes of $n$ and the set of connected circulant graphs of order $n$ (see Theorem \ref{apn-thm}).  As a corollary, we give an enumeration formula of the number of aperiodic palindromes of $n$ (i.e., the number of connected circulant graphs of order $n$) in Corollary \ref{acounting}.

\section{One-to-one correspondence between the circulant digraphs of order $n$ and the compositions of $n$} \label{correspondence}
In this section, we first show that there is a bijection between the set of compositions of $n$ and the set of circulant digraphs of order $n$ in Theorem \ref{dg&c}. In particular, we also show that this bijection in Theorem \ref{dg&c} guarantees a bijection between the set of prime compositions of $n$ and the set of connected circulant digraphs of order $n$ (see Theorem \ref{dicirculantthm}). Moreover, we will enumerate the number of connected circulant digraphs of order $n$ (i.e., the number of prime compositions of $n$) by using the M\"obius inversion formula (see Corollary \ref{ancounting}).\\

For each integer $n\geq 1$, we define
$$ C(n) = \{ \sigma \mid \sigma~{\rm{is}~\rm{a}~\rm{composition}~\rm{of}}~n\} \mbox{~and~} \mathcal{G}_C(n) = \{ \Omega \mid \Omega \subseteq \BZ_n, 0 \in \Omega \} .$$
Without loss of generality, each element $\Omega \in \mathcal{G}_C(n)$ with $|\Omega|=t\geq 1$ is denoted by $\Omega=\{ a_1, a_2,\ldots, a_t\}$ with $0=a_1<a_2<\cdots<a_t$.

Note here that in view of \cite[Theorem 2.2]{KKL}, there is a bijection between the set  $\mathcal{G}_C(n)$ and the set of equivalence classes of circulant digraphs of order $n$. In Table~\ref{com5exafig}, we consider the elements $\Omega$ of $\mathcal{G}_C(5)$, the corresponding circulant digraphs $G(5,\Omega)$ of order $5$ and the corresponding compositions $\sigma_{\Omega}$ of $5$ which will be defined in (\ref{sigmaS}).\\

\begin{table}
\begin{tabular}{||c|c|c||c|c|c||}\hline
$\Omega \in \mathcal{G}_C(5)$ & $G(5,\Omega)$ & $\sigma_{\Omega}\in C(5)$ & $\Omega \in \mathcal{G}_C(5)$ & $G(5,\Omega)$ & $\sigma_{\Omega}\in C(5)$ \\ \hline
$\{0\}$ &
$
\begin{pspicture}[shift=-1](-1.4,-1.1)(1.4,1.3)
\pscircle[fillcolor=lightgray, fillstyle=solid, linewidth=1pt](1;18){.2}
\pscircle[fillcolor=lightgray, fillstyle=solid, linewidth=1pt](1;90){.2}
\pscircle[fillcolor=lightgray, fillstyle=solid, linewidth=1pt](1;162){.2}
\pscircle[fillcolor=lightgray, fillstyle=solid, linewidth=1pt](1;234){.2}
\pscircle[fillcolor=lightgray, fillstyle=solid, linewidth=1pt](1;306){.2}
\rput(1;18){{\footnotesize{0}}}\rput(1;90){{{\footnotesize{1}}}}\rput(1;162){{{\footnotesize{2}}}}
\rput(1;234){{\footnotesize{3}}}\rput(1;306){{\footnotesize{4}}}
\end{pspicture}$ & $5$ & $\{0,2,3\}$  &
$\begin{pspicture}[shift=-1](0,-1.1)(0,1.3) \end{pspicture}
\begin{pspicture}[shift=-1](0,1)(0,1)
\begin{pspicture}[shift=-1](-1.4,-1.1)(1.4,1.3)
\pccurve[angleA=160,angleB=20,ncurv=1](1;18)(1;162)\middlearrow
\pccurve[angleA=232,angleB=92,ncurv=1](1;90)(1;234)\middlearrow
\pccurve[angleA=304,angleB=164,ncurv=1](1;162)(1;306)\middlearrow
\pccurve[angleA=26,angleB=246,ncurv=1](1;234)(1;18)\middlearrow
\pccurve[angleA=88,angleB=308,ncurv=1](1;306)(1;90)\middlearrow
\pcline(1;18)(1;234)\middlearrow \pcline(1;234)(1;90)\middlearrow
\pcline(1;90)(1;306)\middlearrow \pcline(1;306)(1;162)\middlearrow
\pcline(1;162)(1;18)\middlearrow
\pscircle[fillcolor=lightgray, fillstyle=solid, linewidth=1pt](1;18){.2}
\pscircle[fillcolor=lightgray, fillstyle=solid, linewidth=1pt](1;90){.2}
\pscircle[fillcolor=lightgray, fillstyle=solid, linewidth=1pt](1;162){.2}
\pscircle[fillcolor=lightgray, fillstyle=solid, linewidth=1pt](1;234){.2}
\pscircle[fillcolor=lightgray, fillstyle=solid, linewidth=1pt](1;306){.2}
\rput(1;18){{\footnotesize{0}}}\rput(1;90){{{\footnotesize{1}}}}\rput(1;162){{{\footnotesize{2}}}}
\rput(1;234){{\footnotesize{3}}}\rput(1;306){{\footnotesize{4}}}
\end{pspicture}\end{pspicture}$  & $212$ \\ \hline
$\{0,1\}$ &
$\begin{pspicture}[shift=-1](0,-1.1)(0,1.3) \end{pspicture}
\begin{pspicture}[shift=-1](0,1)(0,1)\begin{pspicture}[shift=-1](-1.4,-1.1)(1.4,1.3)
\pcline(1;18)(1;90)\middlearrow \pcline(1;90)(1;162)\middlearrow
\pcline(1;162)(1;234)\middlearrow \pcline(1;234)(1;306)\middlearrow
\pcline(1;306)(1;18)\middlearrow
\pscircle[fillcolor=lightgray, fillstyle=solid, linewidth=1pt](1;18){.2}
\pscircle[fillcolor=lightgray, fillstyle=solid, linewidth=1pt](1;90){.2}
\pscircle[fillcolor=lightgray, fillstyle=solid, linewidth=1pt](1;162){.2}
\pscircle[fillcolor=lightgray, fillstyle=solid, linewidth=1pt](1;234){.2}
\pscircle[fillcolor=lightgray, fillstyle=solid, linewidth=1pt](1;306){.2}
\rput(1;18){{\footnotesize{0}}}\rput(1;90){{{\footnotesize{1}}}}\rput(1;162){{{\footnotesize{2}}}}
\rput(1;234){{\footnotesize{3}}}\rput(1;306){{\footnotesize{4}}}
\end{pspicture}\end{pspicture}$ & $14$ & $\{0,2,4\}$  &
$\begin{pspicture}[shift=-1](0,-1.1)(0,1.3) \end{pspicture}
\begin{pspicture}[shift=-1](0,1)(0,1)\begin{pspicture}[shift=-1](-1.4,-1.1)(1.4,1.3)
\pcline(1;18)(1;162)\middlearrow \pcline(1;162)(1;306)\middlearrow
\pcline(1;306)(1;90)\middlearrow \pcline(1;90)(1;234)\middlearrow
\pcline(1;234)(1;18)\middlearrow
\pcline(1;18)(1;306)\middlearrow \pcline(1;306)(1;234)\middlearrow
\pcline(1;234)(1;162)\middlearrow \pcline(1;162)(1;90)\middlearrow
\pcline(1;90)(1;18)\middlearrow
\pscircle[fillcolor=lightgray, fillstyle=solid, linewidth=1pt](1;18){.2}
\pscircle[fillcolor=lightgray, fillstyle=solid, linewidth=1pt](1;90){.2}
\pscircle[fillcolor=lightgray, fillstyle=solid, linewidth=1pt](1;162){.2}
\pscircle[fillcolor=lightgray, fillstyle=solid, linewidth=1pt](1;234){.2}
\pscircle[fillcolor=lightgray, fillstyle=solid, linewidth=1pt](1;306){.2}
\rput(1;18){{\footnotesize{0}}}\rput(1;90){{{\footnotesize{1}}}}\rput(1;162){{{\footnotesize{2}}}}
\rput(1;234){{\footnotesize{3}}}\rput(1;306){{\footnotesize{4}}}
\end{pspicture}\end{pspicture}$  & $221$ \\ \hline
$\{0,2\}$ &
$\begin{pspicture}[shift=-1](0,-1.1)(0,1.3) \end{pspicture}
\begin{pspicture}[shift=-1](0,1)(0,1)\begin{pspicture}[shift=-1](-1.4,-1.1)(1.4,1.3)
\pcline(1;18)(1;162)\middlearrow \pcline(1;162)(1;306)\middlearrow
\pcline(1;306)(1;90)\middlearrow \pcline(1;90)(1;234)\middlearrow
\pcline(1;234)(1;18)\middlearrow
\pscircle[fillcolor=lightgray, fillstyle=solid, linewidth=1pt](1;18){.2}
\pscircle[fillcolor=lightgray, fillstyle=solid, linewidth=1pt](1;90){.2}
\pscircle[fillcolor=lightgray, fillstyle=solid, linewidth=1pt](1;162){.2}
\pscircle[fillcolor=lightgray, fillstyle=solid, linewidth=1pt](1;234){.2}
\pscircle[fillcolor=lightgray, fillstyle=solid, linewidth=1pt](1;306){.2}
\rput(1;18){{\footnotesize{0}}}\rput(1;90){{{\footnotesize{1}}}}\rput(1;162){{{\footnotesize{2}}}}
\rput(1;234){{\footnotesize{3}}}\rput(1;306){{\footnotesize{4}}}
\end{pspicture}\end{pspicture}$ & $23$ & $\{0,3,4\}$  &
$\begin{pspicture}[shift=-1](0,-1.1)(0,1.3) \end{pspicture}
\begin{pspicture}[shift=-1](0,1)(0,1)\begin{pspicture}[shift=-1](-1.4,-1.1)(1.4,1.3)
\pcline(1;18)(1;234)\middlearrow \pcline(1;234)(1;90)\middlearrow
\pcline(1;90)(1;306)\middlearrow \pcline(1;306)(1;162)\middlearrow
\pcline(1;162)(1;18)\middlearrow
\pcline(1;18)(1;306)\middlearrow \pcline(1;306)(1;234)\middlearrow
\pcline(1;234)(1;162)\middlearrow \pcline(1;162)(1;90)\middlearrow
\pcline(1;90)(1;18)\middlearrow
\pscircle[fillcolor=lightgray, fillstyle=solid, linewidth=1pt](1;18){.2}
\pscircle[fillcolor=lightgray, fillstyle=solid, linewidth=1pt](1;90){.2}
\pscircle[fillcolor=lightgray, fillstyle=solid, linewidth=1pt](1;162){.2}
\pscircle[fillcolor=lightgray, fillstyle=solid, linewidth=1pt](1;234){.2}
\pscircle[fillcolor=lightgray, fillstyle=solid, linewidth=1pt](1;306){.2}
\rput(1;18){{\footnotesize{0}}}\rput(1;90){{{\footnotesize{1}}}}\rput(1;162){{{\footnotesize{2}}}}
\rput(1;234){{\footnotesize{3}}}\rput(1;306){{\footnotesize{4}}}
\end{pspicture}\end{pspicture}$  & $311$ \\ \hline
$\{0,3\}$ &
$\begin{pspicture}[shift=-1](0,-1.1)(0,1.3) \end{pspicture}
\begin{pspicture}[shift=-1](0,1)(0,1)\begin{pspicture}[shift=-1](-1.4,-1.1)(1.4,1.3)
\pcline(1;18)(1;234)\middlearrow \pcline(1;234)(1;90)\middlearrow
\pcline(1;90)(1;306)\middlearrow \pcline(1;306)(1;162)\middlearrow
\pcline(1;162)(1;18)\middlearrow
\pscircle[fillcolor=lightgray, fillstyle=solid, linewidth=1pt](1;18){.2}
\pscircle[fillcolor=lightgray, fillstyle=solid, linewidth=1pt](1;90){.2}
\pscircle[fillcolor=lightgray, fillstyle=solid, linewidth=1pt](1;162){.2}
\pscircle[fillcolor=lightgray, fillstyle=solid, linewidth=1pt](1;234){.2}
\pscircle[fillcolor=lightgray, fillstyle=solid, linewidth=1pt](1;306){.2}
\rput(1;18){{\footnotesize{0}}}\rput(1;90){{{\footnotesize{1}}}}\rput(1;162){{{\footnotesize{2}}}}
\rput(1;234){{\footnotesize{3}}}\rput(1;306){{\footnotesize{4}}}
\end{pspicture}\end{pspicture}$ & $32$ & $\{0,1,2,3\}$  &
$\begin{pspicture}[shift=-1](0,-1.1)(0,1.3) \end{pspicture}
\begin{pspicture}[shift=-1](0,1)(0,1)\begin{pspicture}[shift=-1](-1.4,-1.1)(1.4,1.3)
\pcline(1;18)(1;90)\middlearrow \pcline(1;90)(1;162)\middlearrow
\pcline(1;162)(1;234)\middlearrow \pcline(1;234)(1;306)\middlearrow
\pcline(1;306)(1;18)\middlearrow
\pccurve[angleA=160,angleB=20,ncurv=1](1;18)(1;162)\middlearrow
\pccurve[angleA=232,angleB=92,ncurv=1](1;90)(1;234)\middlearrow
\pccurve[angleA=304,angleB=164,ncurv=1](1;162)(1;306)\middlearrow
\pccurve[angleA=26,angleB=246,ncurv=1](1;234)(1;18)\middlearrow
\pccurve[angleA=88,angleB=308,ncurv=1](1;306)(1;90)\middlearrow
\pcline(1;18)(1;234)\middlearrow \pcline(1;234)(1;90)\middlearrow
\pcline(1;90)(1;306)\middlearrow \pcline(1;306)(1;162)\middlearrow
\pcline(1;162)(1;18)\middlearrow
\pscircle[fillcolor=lightgray, fillstyle=solid, linewidth=1pt](1;18){.2}
\pscircle[fillcolor=lightgray, fillstyle=solid, linewidth=1pt](1;90){.2}
\pscircle[fillcolor=lightgray, fillstyle=solid, linewidth=1pt](1;162){.2}
\pscircle[fillcolor=lightgray, fillstyle=solid, linewidth=1pt](1;234){.2}
\pscircle[fillcolor=lightgray, fillstyle=solid, linewidth=1pt](1;306){.2}
\rput(1;18){{\footnotesize{0}}}\rput(1;90){{{\footnotesize{1}}}}\rput(1;162){{{\footnotesize{2}}}}
\rput(1;234){{\footnotesize{3}}}\rput(1;306){{\footnotesize{4}}}
\end{pspicture}\end{pspicture}$  & $1112$ \\ \hline
$\{0,4\}$ &
$\begin{pspicture}[shift=-1](0,-1.1)(0,1.3) \end{pspicture}
\begin{pspicture}[shift=-1](0,1)(0,1)\begin{pspicture}[shift=-1](-1.4,-1.1)(1.4,1.3)
\pcline(1;18)(1;306)\middlearrow \pcline(1;306)(1;234)\middlearrow
\pcline(1;234)(1;162)\middlearrow \pcline(1;162)(1;90)\middlearrow
\pcline(1;90)(1;18)\middlearrow
\pscircle[fillcolor=lightgray, fillstyle=solid, linewidth=1pt](1;18){.2}
\pscircle[fillcolor=lightgray, fillstyle=solid, linewidth=1pt](1;90){.2}
\pscircle[fillcolor=lightgray, fillstyle=solid, linewidth=1pt](1;162){.2}
\pscircle[fillcolor=lightgray, fillstyle=solid, linewidth=1pt](1;234){.2}
\pscircle[fillcolor=lightgray, fillstyle=solid, linewidth=1pt](1;306){.2}
\rput(1;18){{\footnotesize{0}}}\rput(1;90){{{\footnotesize{1}}}}\rput(1;162){{{\footnotesize{2}}}}
\rput(1;234){{\footnotesize{3}}}\rput(1;306){{\footnotesize{4}}}
\end{pspicture}\end{pspicture}$ & $41$ & $\{0,1,2,4\}$  &
$\begin{pspicture}[shift=-1](0,-1.1)(0,1.3) \end{pspicture}
\begin{pspicture}[shift=-1](0,1)(0,1)\begin{pspicture}[shift=-1](-1.4,-1.1)(1.4,1.3)
\pccurve[angleA=129,angleB=-21,ncurv=1](1;18)(1;90)\middlearrow
\pccurve[angleA=201,angleB=51,ncurv=1](1;90)(1;162)\middlearrow
\pccurve[angleA=273,angleB=123,ncurv=1](1;162)(1;234)\middlearrow
\pccurve[angleA=345,angleB=195,ncurv=1](1;234)(1;306)\middlearrow
\pccurve[angleA=57,angleB=267,ncurv=1](1;306)(1;18)\middlearrow
\pccurve[angleA=-123,angleB=87,ncurv=1](1;18)(1;306)\middlearrow
\pccurve[angleA=-51,angleB=159,ncurv=1](1;90)(1;18)\middlearrow
\pccurve[angleA=21,angleB=231,ncurv=1](1;162)(1;90)\middlearrow
\pccurve[angleA=93,angleB=303,ncurv=1](1;234)(1;162)\middlearrow
\pccurve[angleA=165,angleB=15,ncurv=1](1;306)(1;234)\middlearrow
\pcline(1;18)(1;162)\middlearrow \pcline(1;162)(1;306)\middlearrow
\pcline(1;306)(1;90)\middlearrow \pcline(1;90)(1;234)\middlearrow
\pcline(1;234)(1;18)\middlearrow
\pscircle[fillcolor=lightgray, fillstyle=solid, linewidth=1pt](1;18){.2}
\pscircle[fillcolor=lightgray, fillstyle=solid, linewidth=1pt](1;90){.2}
\pscircle[fillcolor=lightgray, fillstyle=solid, linewidth=1pt](1;162){.2}
\pscircle[fillcolor=lightgray, fillstyle=solid, linewidth=1pt](1;234){.2}
\pscircle[fillcolor=lightgray, fillstyle=solid, linewidth=1pt](1;306){.2}
\rput(1;18){{\footnotesize{0}}}\rput(1;90){{{\footnotesize{1}}}}\rput(1;162){{{\footnotesize{2}}}}
\rput(1;234){{\footnotesize{3}}}\rput(1;306){{\footnotesize{4}}}
\end{pspicture}\end{pspicture}$  & $1121$ \\ \hline
$\{0,1,2\}$ &
$\begin{pspicture}[shift=-1](0,-1.1)(0,1.3) \end{pspicture}
\begin{pspicture}[shift=-1](0,1)(0,1)\begin{pspicture}[shift=-1](-1.4,-1.1)(1.4,1.3)
\pcline(1;18)(1;90)\middlearrow \pcline(1;90)(1;162)\middlearrow
\pcline(1;162)(1;234)\middlearrow \pcline(1;234)(1;306)\middlearrow
\pcline(1;306)(1;18)\middlearrow
\pcline(1;18)(1;162)\middlearrow \pcline(1;162)(1;306)\middlearrow
\pcline(1;306)(1;90)\middlearrow \pcline(1;90)(1;234)\middlearrow
\pcline(1;234)(1;18)\middlearrow
\pscircle[fillcolor=lightgray, fillstyle=solid, linewidth=1pt](1;18){.2}
\pscircle[fillcolor=lightgray, fillstyle=solid, linewidth=1pt](1;90){.2}
\pscircle[fillcolor=lightgray, fillstyle=solid, linewidth=1pt](1;162){.2}
\pscircle[fillcolor=lightgray, fillstyle=solid, linewidth=1pt](1;234){.2}
\pscircle[fillcolor=lightgray, fillstyle=solid, linewidth=1pt](1;306){.2}
\rput(1;18){{\footnotesize{0}}}\rput(1;90){{{\footnotesize{1}}}}\rput(1;162){{{\footnotesize{2}}}}
\rput(1;234){{\footnotesize{3}}}\rput(1;306){{\footnotesize{4}}}
\end{pspicture}\end{pspicture}$ & $113$ & $\{0,1,3,4\}$  &
$\begin{pspicture}[shift=-1](0,-1.1)(0,1.3) \end{pspicture}
\begin{pspicture}[shift=-1](0,1)(0,1)\begin{pspicture}[shift=-1](-1.4,-1.1)(1.4,1.3)
\pccurve[angleA=129,angleB=-21,ncurv=1](1;18)(1;90)\middlearrow
\pccurve[angleA=201,angleB=51,ncurv=1](1;90)(1;162)\middlearrow
\pccurve[angleA=273,angleB=123,ncurv=1](1;162)(1;234)\middlearrow
\pccurve[angleA=345,angleB=195,ncurv=1](1;234)(1;306)\middlearrow
\pccurve[angleA=57,angleB=267,ncurv=1](1;306)(1;18)\middlearrow
\pccurve[angleA=-123,angleB=87,ncurv=1](1;18)(1;306)\middlearrow
\pccurve[angleA=-51,angleB=159,ncurv=1](1;90)(1;18)\middlearrow
\pccurve[angleA=21,angleB=231,ncurv=1](1;162)(1;90)\middlearrow
\pccurve[angleA=93,angleB=303,ncurv=1](1;234)(1;162)\middlearrow
\pccurve[angleA=165,angleB=15,ncurv=1](1;306)(1;234)\middlearrow
\pcline(1;18)(1;234)\middlearrow \pcline(1;234)(1;90)\middlearrow
\pcline(1;90)(1;306)\middlearrow \pcline(1;306)(1;162)\middlearrow
\pcline(1;162)(1;18)\middlearrow
\pscircle[fillcolor=lightgray, fillstyle=solid, linewidth=1pt](1;18){.2}
\pscircle[fillcolor=lightgray, fillstyle=solid, linewidth=1pt](1;90){.2}
\pscircle[fillcolor=lightgray, fillstyle=solid, linewidth=1pt](1;162){.2}
\pscircle[fillcolor=lightgray, fillstyle=solid, linewidth=1pt](1;234){.2}
\pscircle[fillcolor=lightgray, fillstyle=solid, linewidth=1pt](1;306){.2}
\rput(1;18){{\footnotesize{0}}}\rput(1;90){{{\footnotesize{1}}}}\rput(1;162){{{\footnotesize{2}}}}
\rput(1;234){{\footnotesize{3}}}\rput(1;306){{\footnotesize{4}}}
\end{pspicture}\end{pspicture}$ & $1211$ \\ \hline
$\{0,1,3\}$ &
$\begin{pspicture}[shift=-1](0,-1.1)(0,1.3) \end{pspicture}
\begin{pspicture}[shift=-1](0,1)(0,1)\begin{pspicture}[shift=-1](-1.4,-1.1)(1.4,1.3)
\pcline(1;18)(1;90)\middlearrow \pcline(1;90)(1;162)\middlearrow
\pcline(1;162)(1;234)\middlearrow \pcline(1;234)(1;306)\middlearrow
\pcline(1;306)(1;18)\middlearrow
\pcline(1;18)(1;234)\middlearrow \pcline(1;234)(1;90)\middlearrow
\pcline(1;90)(1;306)\middlearrow \pcline(1;306)(1;162)\middlearrow
\pcline(1;162)(1;18)\middlearrow
\pscircle[fillcolor=lightgray, fillstyle=solid, linewidth=1pt](1;18){.2}
\pscircle[fillcolor=lightgray, fillstyle=solid, linewidth=1pt](1;90){.2}
\pscircle[fillcolor=lightgray, fillstyle=solid, linewidth=1pt](1;162){.2}
\pscircle[fillcolor=lightgray, fillstyle=solid, linewidth=1pt](1;234){.2}
\pscircle[fillcolor=lightgray, fillstyle=solid, linewidth=1pt](1;306){.2}
\rput(1;18){{\footnotesize{0}}}\rput(1;90){{{\footnotesize{1}}}}\rput(1;162){{{\footnotesize{2}}}}
\rput(1;234){{\footnotesize{3}}}\rput(1;306){{\footnotesize{4}}}
\end{pspicture}\end{pspicture}$ & $122$ & $\{0,2,3,4\}$  &
$\begin{pspicture}[shift=-1](0,-1.1)(0,1.3) \end{pspicture}
\begin{pspicture}[shift=-1](0,1)(0,1)\begin{pspicture}[shift=-1](-1.4,-1.1)(1.4,1.3)
\pcline(1;18)(1;306)\middlearrow \pcline(1;306)(1;234)\middlearrow
\pcline(1;234)(1;162)\middlearrow \pcline(1;162)(1;90)\middlearrow
\pcline(1;90)(1;18)\middlearrow
\pccurve[angleA=160,angleB=20,ncurv=1](1;18)(1;162)\middlearrow
\pccurve[angleA=232,angleB=92,ncurv=1](1;90)(1;234)\middlearrow
\pccurve[angleA=304,angleB=164,ncurv=1](1;162)(1;306)\middlearrow
\pccurve[angleA=26,angleB=246,ncurv=1](1;234)(1;18)\middlearrow
\pccurve[angleA=88,angleB=308,ncurv=1](1;306)(1;90)\middlearrow
\pcline(1;18)(1;234)\middlearrow \pcline(1;234)(1;90)\middlearrow
\pcline(1;90)(1;306)\middlearrow \pcline(1;306)(1;162)\middlearrow
\pcline(1;162)(1;18)\middlearrow
\pscircle[fillcolor=lightgray, fillstyle=solid, linewidth=1pt](1;18){.2}
\pscircle[fillcolor=lightgray, fillstyle=solid, linewidth=1pt](1;90){.2}
\pscircle[fillcolor=lightgray, fillstyle=solid, linewidth=1pt](1;162){.2}
\pscircle[fillcolor=lightgray, fillstyle=solid, linewidth=1pt](1;234){.2}
\pscircle[fillcolor=lightgray, fillstyle=solid, linewidth=1pt](1;306){.2}
\rput(1;18){{\footnotesize{0}}}\rput(1;90){{{\footnotesize{1}}}}\rput(1;162){{{\footnotesize{2}}}}
\rput(1;234){{\footnotesize{3}}}\rput(1;306){{\footnotesize{4}}}
\end{pspicture}\end{pspicture}$ & $2111$ \\ \hline
$\{0,1,4\}$ &
$\begin{pspicture}[shift=-1](0,-1.1)(0,1.3) \end{pspicture}
\begin{pspicture}[shift=-1](0,1)(0,1)\begin{pspicture}[shift=-1](-1.4,-1.1)(1.4,1.3)
\pccurve[angleA=129,angleB=-21,ncurv=1](1;18)(1;90)\middlearrow
\pccurve[angleA=201,angleB=51,ncurv=1](1;90)(1;162)\middlearrow
\pccurve[angleA=273,angleB=123,ncurv=1](1;162)(1;234)\middlearrow
\pccurve[angleA=345,angleB=195,ncurv=1](1;234)(1;306)\middlearrow
\pccurve[angleA=57,angleB=267,ncurv=1](1;306)(1;18)\middlearrow
\pccurve[angleA=-123,angleB=87,ncurv=1](1;18)(1;306)\middlearrow
\pccurve[angleA=-51,angleB=159,ncurv=1](1;90)(1;18)\middlearrow
\pccurve[angleA=21,angleB=231,ncurv=1](1;162)(1;90)\middlearrow
\pccurve[angleA=93,angleB=303,ncurv=1](1;234)(1;162)\middlearrow
\pccurve[angleA=165,angleB=15,ncurv=1](1;306)(1;234)\middlearrow
\pscircle[fillcolor=lightgray, fillstyle=solid, linewidth=1pt](1;18){.2}
\pscircle[fillcolor=lightgray, fillstyle=solid, linewidth=1pt](1;90){.2}
\pscircle[fillcolor=lightgray, fillstyle=solid, linewidth=1pt](1;162){.2}
\pscircle[fillcolor=lightgray, fillstyle=solid, linewidth=1pt](1;234){.2}
\pscircle[fillcolor=lightgray, fillstyle=solid, linewidth=1pt](1;306){.2}
\rput(1;18){{\footnotesize{0}}}\rput(1;90){{{\footnotesize{1}}}}\rput(1;162){{{\footnotesize{2}}}}
\rput(1;234){{\footnotesize{3}}}\rput(1;306){{\footnotesize{4}}}
\end{pspicture}\end{pspicture}$ & $131$ & $\{0,1,2,3,4\}$  &
$\begin{pspicture}[shift=-1](0,-1.1)(0,1.3) \end{pspicture}
\begin{pspicture}[shift=-1](0,1)(0,1)\begin{pspicture}[shift=-1](-1.4,-1.1)(1.4,1.3)
\pccurve[angleA=124,angleB=-16,ncurv=1](1;18)(1;90)\middlearrow
\pccurve[angleA=196,angleB=56,ncurv=1](1;90)(1;162)\middlearrow
\pccurve[angleA=268,angleB=128,ncurv=1](1;162)(1;234)\middlearrow
\pccurve[angleA=340,angleB=200,ncurv=1](1;234)(1;306)\middlearrow
\pccurve[angleA=52,angleB=272,ncurv=1](1;306)(1;18)\middlearrow
\pcline(1;18)(1;306)\middlearrow \pcline(1;306)(1;234)\middlearrow
\pcline(1;234)(1;162)\middlearrow \pcline(1;162)(1;90)\middlearrow
\pcline(1;90)(1;18)\middlearrow
\pccurve[angleA=165,angleB=15,ncurv=1](1;18)(1;162)\middlearrow
\pccurve[angleA=237,angleB=87,ncurv=1](1;90)(1;234)\middlearrow
\pccurve[angleA=309,angleB=159,ncurv=1](1;162)(1;306)\middlearrow
\pccurve[angleA=31,angleB=241,ncurv=1](1;234)(1;18)\middlearrow
\pccurve[angleA=93,angleB=303,ncurv=1](1;306)(1;90)\middlearrow
\pcline(1;18)(1;234)\middlearrow \pcline(1;234)(1;90)\middlearrow
\pcline(1;90)(1;306)\middlearrow \pcline(1;306)(1;162)\middlearrow
\pcline(1;162)(1;18)\middlearrow
\pscircle[fillcolor=lightgray, fillstyle=solid, linewidth=1pt](1;18){.2}
\pscircle[fillcolor=lightgray, fillstyle=solid, linewidth=1pt](1;90){.2}
\pscircle[fillcolor=lightgray, fillstyle=solid, linewidth=1pt](1;162){.2}
\pscircle[fillcolor=lightgray, fillstyle=solid, linewidth=1pt](1;234){.2}
\pscircle[fillcolor=lightgray, fillstyle=solid, linewidth=1pt](1;306){.2}
\rput(1;18){{\footnotesize{0}}}\rput(1;90){{{\footnotesize{1}}}}\rput(1;162){{{\footnotesize{2}}}}
\rput(1;234){{\footnotesize{3}}}\rput(1;306){{\footnotesize{4}}}
\end{pspicture}\end{pspicture}$  & $11111$ \\ \hline
\end{tabular}
\vskip .2cm
\caption{$\Omega \in \mathcal{G}_C(5)$, $G(5,\Omega)$ and $\sigma_{\Omega}\in C(5)$} \label{com5exafig}
\end{table}

In the following, we define two notations which will be used to consider the relationship between $C(n)$ and $\mathcal{G}_C(n)$. Let $C_n$ be the cycle of length $n$ with vertex set $\B{Z}_n=\{0, 1,2, \ldots, n-1\}$ (see Figure \ref{figsigmaS} (a)). We consider each element $\Omega \in \mathcal{G}_C(n)$ as a subset of the vertex set of $C_n$. For each element $\Omega=\{a_1,a_2\ldots,a_t\}$ of $\mathcal{G}_C(n)$, we define a composition
\begin{equation}\label{sigmaS}
\sigma_{\Omega}=\omega_1\omega_2\cdots \omega_t \in C(n)
\end{equation}
where $\omega_i= a_{i+1}-a_i$ for each $i=1, 2, \ldots, t-1$ and $\omega_t=n-a_t$ (see Figure \ref{figsigmaS} (b)). Notice that $\omega_i$ is the number of encountered edges when we move anticlockwise from $a_i$ to $a_{i+1}$ for each $1\leq i\leq t$ where $a_{t+1}:=a_1$.

\begin{figure}
$$
\begin{pspicture}[shift=-2.4](-2.5,-3.1)(2.5,2.5)
\psarc(0,0){2}{-70}{250}
\psarc[linestyle=dotted, linewidth=1.2pt](0,0){2}{250}{-70}
\pscircle[fillcolor=lightgray, fillstyle=solid, linewidth=1pt](2;-60){.1}
\pscircle[fillcolor=lightgray, fillstyle=solid, linewidth=1pt](2;-40){.1}
\pscircle[fillcolor=lightgray, fillstyle=solid, linewidth=1pt](2;-20){.1}
\pscircle[fillcolor=lightgray, fillstyle=solid, linewidth=1pt](2;0){.1}
\pscircle[fillcolor=lightgray, fillstyle=solid, linewidth=1pt](2;20){.1}
\pscircle[fillcolor=lightgray, fillstyle=solid, linewidth=1pt](2;40){.1}
\pscircle[fillcolor=lightgray, fillstyle=solid, linewidth=1pt](2;60){.1}
\pscircle[fillcolor=lightgray, fillstyle=solid, linewidth=1pt](2;80){.1}
\pscircle[fillcolor=lightgray, fillstyle=solid, linewidth=1pt](2;100){.1}
\pscircle[fillcolor=lightgray, fillstyle=solid, linewidth=1pt](2;120){.1}
\pscircle[fillcolor=lightgray, fillstyle=solid, linewidth=1pt](2;140){.1}
\pscircle[fillcolor=lightgray, fillstyle=solid, linewidth=1pt](2;160){.1}
\pscircle[fillcolor=lightgray, fillstyle=solid, linewidth=1pt](2;180){.1}
\pscircle[fillcolor=lightgray, fillstyle=solid, linewidth=1pt](2;200){.1}
\pscircle[fillcolor=lightgray, fillstyle=solid, linewidth=1pt](2;220){.1}
\pscircle[fillcolor=lightgray, fillstyle=solid, linewidth=1pt](2;240){.1}
\pscircle[fillcolor=white, linecolor=white, fillstyle=solid](2.5;-22.5){.32}
\pscircle[fillcolor=white, linecolor=white, fillstyle=solid](2.5;30){.32}
\pscircle[fillcolor=white, linecolor=white, fillstyle=solid](2.5;105){.32}
\rput(1.45;-20){$n-1$} \rput(1.7;0){$0$} \rput(1.7;20){$1$}  \rput(1.7;40){$2$} \rput(1.7;60){$3$}
\rput(1.7;80){$4$} \rput(1.7;100){$5$}
\rput[t](0,-2.5){$(a)$}
\end{pspicture}\quad\quad
\begin{pspicture}[shift=-2.4](-2.5,-3.1)(3.2,2.5)
\psarc(0,0){2}{-70}{250}
\psarc[linestyle=dotted, linewidth=1.2pt](0,0){2}{250}{-70}
\pccurve[angleA=20,angleB=-45, linestyle=dashed](2.5;-70)(2;-45)
\pccurve[angleA=-45,angleB=-112.5, linestyle=dashed](2;-45)(2.5;-22.5)
\pccurve[angleA=67.5,angleB=0, linestyle=dashed](2.5;-22.5)(2;0)
\pccurve[angleA=0,angleB=-60, linestyle=dashed](2;0)(2.5;30)
\pccurve[angleA=120,angleB=60, linestyle=dashed](2.5;30)(2;60)
\pccurve[angleA=60,angleB=15, linestyle=dashed](2;60)(2.5;105)
\pccurve[angleA=-165,angleB=150, linestyle=dashed](2.5;105)(2;150)
\pccurve[angleA=150,angleB=90, linestyle=dashed](2;150)(2.5;180)
\pscircle[fillcolor=lightgray, fillstyle=solid, linewidth=1pt](2;-45){.1}
\pscircle[fillcolor=lightgray, fillstyle=solid, linewidth=1pt](2;0){.1}
\pscircle[fillcolor=lightgray, fillstyle=solid, linewidth=1pt](2;60){.1}
\pscircle[fillcolor=lightgray, fillstyle=solid, linewidth=1pt](2;150){.1}
\pscircle[fillcolor=white, linecolor=white, fillstyle=solid](2.5;-22.5){.32}
\pscircle[fillcolor=white, linecolor=white, fillstyle=solid](2.5;30){.32}
\pscircle[fillcolor=white, linecolor=white, fillstyle=solid](2.5;105){.32}
\rput(3;0){$a_1=0$} \rput(1.7;-45){$a_t$} \rput(1.7;60){$a_2$}  \rput(1.7;150){$a_3$} 
\rput(2.5;-22.5){$\omega_t$} \rput(2.5;30){$\omega_1$} \rput(2.5;105){$\omega_2$}
\rput[t](0,-2.5){$(b)$}
\end{pspicture}
$$
\caption{$(a)$ $C_{n}$      $(b)$ $\Omega=\{ a_1$, $a_2$, $\ldots$, $a_t\}$ and $\sigma_{\Omega}
=\omega_1\omega_2 \cdots \omega_t$} \label{figsigmaS}
\end{figure}
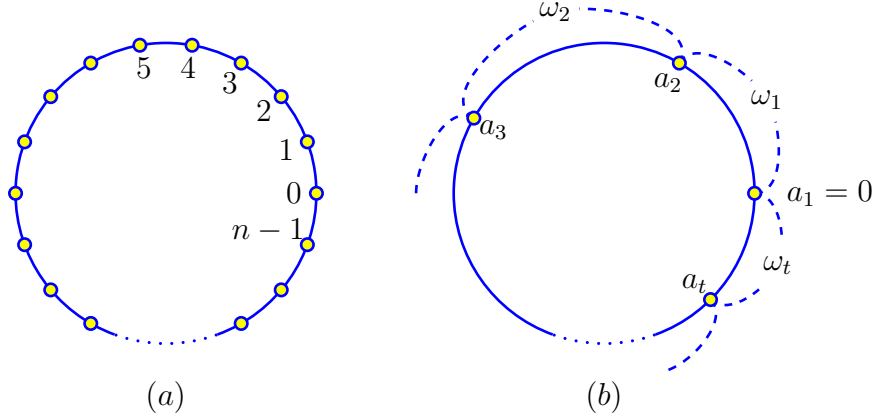

Conversely, for each composition $\sigma=\sigma_1 \sigma_2\cdots \sigma_{\ell}\in C(n)$, we define a set
\begin{equation}\label{Ssigma}
\Omega_{\sigma}=\{s_1,s_2\cdots s_{\ell}\}\in \mathcal{G}_C(n)
\end{equation}
where $s_1=0$, $s_i=\sum_{j=1}^{i-1}\sigma_j $ for each $i=2, \ldots, \ell $. Note that if $\sigma=n\in C(n)$ and $\sigma '=\overbrace{11\cdots 1}^\text{$n$ times of $1$} \in C(n)$ then $\Omega_{\sigma}=\{0\}$ and $\Omega_{\sigma '}=\{0,1,2,\ldots,n-1\}$, respectively.\\

In the following theorem, we will show that these constructions $\sigma_{\Omega}$ and $\Omega_{\sigma}$ in (\ref{sigmaS}) and (\ref{Ssigma}) give a one-to-one correspondence between the sets $\mathcal{G}_C(n)$ and $C(n)$.

\begin{thm}\label{dg&c}
For each integer $n\geq 1$, the map $\psi : \mathcal{G}_C(n) \longrightarrow C(n)$ defined by $\psi (\Omega)= \sigma_{\Omega}$ is a bijection (i.e., there is a one-to-one correspondence between the set of circulant digraphs of order $n$ and the set of compositions of $n$.).
\end{thm}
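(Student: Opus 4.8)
The plan is to exhibit an explicit two-sided inverse for $\psi$, namely the map $\phi\colon C(n)\longrightarrow \mathcal{G}_C(n)$ sending each composition $\sigma$ to the set $\Omega_\sigma$ defined in (\ref{Ssigma}). Once $\psi$ and $\phi$ are shown to be well-defined maps between the stated sets and to be mutually inverse, bijectivity follows at once.

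First I would check that $\psi$ and $\phi$ really land in the correct codomains. For $\Omega=\{a_1,\ldots,a_t\}$ with $0=a_1<\cdots<a_t<n$, each $\omega_i=a_{i+1}-a_i$ (and $\omega_t=n-a_t$) is a positive integer, because the $a_i$ are strictly increasing and $a_t\leq n-1$; moreover the telescoping sum gives $\sum_{i=1}^{t}\omega_i=(a_t-a_1)+(n-a_t)=n$, so $\sigma_\Omega\in C(n)$. Conversely, for $\sigma=\sigma_1\cdots\sigma_\ell\in C(n)$ the partial sums $s_i=\sum_{j=1}^{i-1}\sigma_j$ satisfy $0=s_1<s_2<\cdots<s_\ell=n-\sigma_\ell\leq n-1$ since each $\sigma_j\geq 1$; hence $\Omega_\sigma$ is a subset of $\BZ_n$ containing $0$, i.e. $\Omega_\sigma\in\mathcal{G}_C(n)$.

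Next I would verify the two composite identities by direct computation. For $\phi\circ\psi=\mathrm{id}_{\mathcal{G}_C(n)}$, starting from $\Omega=\{a_1,\ldots,a_t\}$ the partial sums of $\sigma_\Omega$ telescope: $s_i=\sum_{j=1}^{i-1}\omega_j=\sum_{j=1}^{i-1}(a_{j+1}-a_j)=a_i-a_1=a_i$, so $\Omega_{\sigma_\Omega}=\Omega$. For $\psi\circ\phi=\mathrm{id}_{C(n)}$, starting from $\sigma=\sigma_1\cdots\sigma_\ell$, the successive differences of $\Omega_\sigma$ recover $s_{i+1}-s_i=\sigma_i$ for $1\leq i\leq \ell-1$, while the final part equals $n-s_\ell=n-(n-\sigma_\ell)=\sigma_\ell$; hence $\sigma_{\Omega_\sigma}=\sigma$.

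Since each verification reduces to a telescoping sum together with the positivity of the parts, I do not expect any genuine obstacle. The only point requiring a little care is the boundary (wrap-around) part $\omega_t=n-a_t$, which geometrically counts the edges traversed anticlockwise from $a_t$ back to $a_1=0$ along $C_n$ (Figure \ref{figsigmaS}); keeping this last part consistent with the cyclic convention $a_{t+1}:=a_1$ is exactly what makes both the ``sum equals $n$'' computation and the recovery of the last difference close up correctly.
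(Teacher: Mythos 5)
Your proposal is correct and follows essentially the same route as the paper: the paper also takes $\sigma\mapsto\Omega_\sigma$ from (\ref{Ssigma}) as the inverse of $\psi$, merely asserting well-definedness, injectivity, and $\psi(\Omega_\sigma)=\sigma$ where you spell out the telescoping computations. Your version is simply a more detailed write-up of the same argument.
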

\begin{proof}
It follows by (\ref{sigmaS}) that the map $\psi$ is well-defined and injective. For any composition $\sigma \in C(n)$, a set $\Omega_{\sigma}$ in $\mathcal{G}_C(n)$ satisfies $\psi(\Omega_{\sigma})=\sigma$, see (\ref{Ssigma}). Thus the map $\psi$ is surjective and this completes the proof.
\end{proof}

Let $n\geq 2$ be an integer. For each integer $1\leq k\leq n$, let
$$ C(n,k) = \{ \sigma\in C(n) \mid \sigma \mbox{~has~$k$~parts}\}\mbox{~and~} \mathcal{G}_{C}(n,k) = \{ \Omega \in  \mathcal{G}_C(n)\mid  |\Omega|=k \}.$$
For each $\Omega\in \mathcal{G}_C(n,k)$, the corresponding circulant digraph $G(\BZ_n,\Omega)$ is a regular graph with outdegree $k-1$. Since the bijection $\psi$ in Theorem \ref{dg&c} naturally matches $\mathcal{G}_C(n,k)$ and $C(n,k)$, and by $\left|\mathcal{G}_C(n,k)\right|=\binom{n-1}{k-1}$, we can find immediately the number of compositions of $n$ with $k$ parts as follows.

\begin{cor} \label{dicirculantvalencyk}
Let $n\geq 2$ be an integer. For each integer $1\leq k\leq n$,
the number of compositions of $n$ with $k$ parts is $\binom{n-1}{k-1}$.
\end{cor}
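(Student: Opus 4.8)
The plan is to deduce this count directly from the bijection $\psi$ of Theorem~\ref{dg&c}, by checking that $\psi$ respects the natural grading on both sides and then performing the enumeration on the set side, where it reduces to an elementary choice of a subset.

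First I would verify that $\psi$ restricts to a bijection $\mathcal{G}_C(n,k)\longrightarrow C(n,k)$. Indeed, for $\Omega=\{a_1,\ldots,a_t\}\in\mathcal{G}_C(n)$ with $|\Omega|=t$, the image $\sigma_{\Omega}=\omega_1\omega_2\cdots\omega_t$ defined in (\ref{sigmaS}) has exactly $t$ parts; hence $|\Omega|=k$ forces $\sigma_{\Omega}\in C(n,k)$. Conversely, the inverse map sends a composition $\sigma=\sigma_1\cdots\sigma_{\ell}\in C(n,\ell)$ to $\Omega_{\sigma}=\{s_1,\ldots,s_{\ell}\}$ as in (\ref{Ssigma}); here the partial sums $s_i$ are strictly increasing because each $\sigma_j\geq 1$, so $|\Omega_{\sigma}|=\ell$. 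Thus $\psi$ matches the number of parts of a composition with the cardinality of the corresponding set, and the global bijection of Theorem~\ref{dg&c} descends to a bijection for each fixed $k$.

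Next I would count $\mathcal{G}_C(n,k)$ directly. An element of $\mathcal{G}_C(n,k)$ is a subset $\Omega\subseteq\BZ_n$ with $0\in\Omega$ and $|\Omega|=k$. Since $0$ is forced to lie in $\Omega$, such a set is determined by its remaining $k-1$ elements, which may be chosen freely from $\{1,2,\ldots,n-1\}$. Hence $|\mathcal{G}_C(n,k)|=\binom{n-1}{k-1}$.

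Combining the two steps, the restricted bijection yields $|C(n,k)|=|\mathcal{G}_C(n,k)|=\binom{n-1}{k-1}$, which is exactly the claimed number of compositions of $n$ with $k$ parts. There is no genuine obstacle in this argument; the only point requiring care is the first step, namely confirming that $\psi$ truly preserves the grading by number of parts (equivalently by cardinality), since it is precisely this compatibility that allows the count to be transported from the combinatorially transparent side $\mathcal{G}_C(n,k)$ to $C(n,k)$.
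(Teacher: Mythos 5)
Your argument is correct and follows the paper's own route: the paper likewise observes that $\psi$ restricts to a bijection between $\mathcal{G}_C(n,k)$ and $C(n,k)$ and that $\left|\mathcal{G}_C(n,k)\right|=\binom{n-1}{k-1}$, since a set containing $0$ is determined by choosing its other $k-1$ elements from $\{1,\ldots,n-1\}$. You merely spell out in more detail why $\psi$ preserves the grading (image has $t$ parts, partial sums are strictly increasing), which the paper takes as immediate.
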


For each $\sigma=\sigma_1 \sigma_2\cdots \sigma_{\ell}\in C(n)$ and for each $\Omega=\{a_1,a_2\ldots,a_t\}\in \mathcal{G}_C(n)$, define
 $$\gcd(\sigma) = \gcd\{\sigma_1, \sigma_2, \ldots, \sigma_{\ell}\}\mbox{~and~} \gcd(\Omega)=\gcd \{a_1,a_2\ldots,a_t\}. $$
For each integer $n\geq 2$, we define
\[ C(n)^* = \{ \sigma \in C(n) \mid \gcd (\sigma)=1\} \mbox{~and~} \mathcal{G}_C(n)^* = \{ \Omega \in \mathcal{G}_C(n) \mid \gcd (\Omega) =1 \}.\]

In the following theorem, we will show that the bijection $\psi$ in Theorem \ref{dg&c} guarantees a one-to-one correspondence between $\mathcal{G}_C(n)^*$ and $C(n)^*$.
\begin{thm} \label{dicirculantthm}
For each integer $n\geq 2$, there is a one-to-one correspondence between $\mathcal{G}_C(n)^*$ and $ C(n)^*$.
\end{thm}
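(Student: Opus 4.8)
The plan is to exploit the bijection $\psi$ already established in Theorem \ref{dg&c}. Since $\psi : \mathcal{G}_C(n) \to C(n)$ is a bijection, its restriction to the subset $\mathcal{G}_C(n)^*$ is automatically injective, and it will restrict to a bijection onto $C(n)^*$ precisely when $\psi$ carries $\mathcal{G}_C(n)^*$ onto $C(n)^*$. Hence the entire theorem collapses to the single equivalence
\[ \Omega \in \mathcal{G}_C(n)^* \iff \sigma_\Omega \in C(n)^*, \qquad \text{that is} \qquad \gcd(\Omega) = 1 \iff \gcd(\sigma_\Omega) = 1. \]
So after invoking Theorem \ref{dg&c} I would reduce everything to a purely number-theoretic comparison between the gcd of the set $\Omega = \{a_1, a_2, \ldots, a_t\}$ (with $a_1 = 0$) and the gcd of its associated gap word $\sigma_\Omega = \omega_1 \omega_2 \cdots \omega_t$.

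The key step I would isolate is the identity
\[ \gcd(\sigma_\Omega) = \gcd(\omega_1, \omega_2, \ldots, \omega_t) = \gcd(a_1, a_2, \ldots, a_t, n), \]
proved by mutual divisibility. For one inclusion, put $e = \gcd(\omega_1, \ldots, \omega_t)$; since each $a_i = \omega_1 + \cdots + \omega_{i-1}$ is a partial sum of the $\omega_j$ and $n = \omega_1 + \cdots + \omega_t$ is their total, $e$ divides every $a_i$ as well as $n$, whence $e \mid \gcd(a_1, \ldots, a_t, n)$. For the reverse inclusion, put $d = \gcd(a_1, \ldots, a_t, n)$; because $\omega_i = a_{i+1} - a_i$ for $i < t$ and the wrap-around gap is $\omega_t = n - a_t$, each $\omega_i$ is an integer combination of the $a_j$ and $n$, so $d \mid \omega_i$ for every $i$ and therefore $d \mid e$. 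Combining the two inclusions yields $d = e$, which is the desired identity.

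With this lemma the equivalence is immediate: the right-hand gcd equals $1$ exactly when the left-hand one does. I would stress that the genuine content sits in the wrap-around term $\omega_t = n - a_t$, for it is precisely this term that injects $n$ into the gcd; since $a_1 = 0$ coincides with $n$ modulo $n$, the entry $n$ stands inside $\gcd(a_1, \ldots, a_t, n)$ on the same footing as the other $a_i$, so that $\gcd(\Omega) = 1$ and $\gcd(\sigma_\Omega) = 1$ express the very same condition, namely that $\Omega$ generates $\BZ_n$ (equivalently, that the circulant digraph $G(n,\Omega)$ is strongly connected). The main obstacle is the careful bookkeeping of this wrap-around gap: one must carry $\omega_t = n - a_t$ throughout rather than comparing only $\gcd(\omega_1, \ldots, \omega_{t-1}) = \gcd(a_2, \ldots, a_t)$ with $\gcd(\Omega)$, since it is exactly the term $n - a_t$ — and the identification $0 \equiv n$ in $\BZ_n$ — that secures the backward implication and hence the surjectivity of $\psi$ onto $C(n)^*$.
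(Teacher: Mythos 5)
Your proposal is correct and takes essentially the same route as the paper: both restrict the bijection $\psi$ of Theorem~\ref{dg&c} to $\mathcal{G}_C(n)^*$ and reduce the statement to the identity $\gcd(\sigma_{\Omega})=\gcd(a_1,a_2,\ldots,a_t,n)$, which the paper asserts in one line ``by the Euclidean algorithm.'' If anything, your explicit bookkeeping of the wrap-around term $\omega_t=n-a_t$ is more careful than the paper's displayed chain of equalities, which is literally valid only under the tacit convention (needed for the theorem itself, e.g.\ for $\Omega=\{0,2,4\}$ with $n=5$) that $\gcd(\Omega)$ is computed with $n$ adjoined, i.e.\ that $\gcd(\Omega)=1$ means $\Omega$ generates $\BZ_n$.
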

\begin{proof}
To prove the result, we will show that the map $\psi|_{\mathcal{G}_C(n)^*} : \mathcal{G}_C(n)^* \longrightarrow C(n)^*$ defined by $\psi|_{\mathcal{G}_C(n)^*} (\Omega)=\psi(\Omega)$ is a bijection, where $\psi: \mathcal{G}_C(n) \longrightarrow C(n)$ is the bijection defined in Theorem \ref{dg&c}. It is enough to show that $\psi|_{\mathcal{G}_C(n)^*}$ is well-defined and surjective since it is injective by Theorem \ref{dg&c}. For each $\Omega=\{a_1$, $a_2$, $\ldots$, $a_t \} \in \mathcal{G}_C(n)^*$, $\gcd \{a_1, a_2, \ldots, a_t \}=1$ and thus it follows by the Euclidean algorithm that
\[1=\gcd \{a_1, a_2, \ldots, a_t\} =\gcd \{a_2-a_1, a_3-a_2, \ldots, n-a_t \}=\gcd (\sigma_{\Omega})=\gcd(\psi|_{\mathcal{G}_C(n)^*}(\Omega)).\]
Hence $\psi|_{\mathcal{G}_C(n)^*}(\Omega) \in C(n)^*$ and thus $\psi|_{\mathcal{G}_C(n)^*}$ is well-defined. Similary, one can check that $\psi|_{\mathcal{G}_C(n)^*}$ is surjective since for each $\sigma \in C(n)^*$, $\psi|_{\mathcal{G}_C(n)^*}(\Omega_{\sigma})=\sigma$ holds for $\Omega_{\sigma}\in \mathcal{G}_C(n)^*$.
\end{proof}

As an application, we now enumerate the number of connected circulant digraphs. It follows by Theorem \ref{dg&c} that the cardinality of $C(n)$ is equal to the number of subsets of $\B{Z}_n$ that contain $0$, i.e., $|C(n)|=2^{n-1}$. By Theorem \ref{dicirculantthm}, $|C(n)^*|$ is equal to the number of generating set of $\B{Z}_n$ that contains $0$. Hence $|C(n)^*|$ is the number of connected circulant digraphs of order $n$, and thus the cardinality of $C(n)\setminus C(n)^* = \{ \sigma \in C(n) \mid \gcd (\sigma)\ne 1\} $ is the number of disconnected circulant digraphs of order $n$.
In the following corollary, we enumerate these two numbers $|C(n)^*|$ and $\left| C(n)\setminus C(n)^* \right|$.

\begin{cor} \label{ancounting}
Let $n\ge 2$ be an integer. Then the following hold:
\begin{enumerate}
\item[(i)] $|C(n)|=2^{n-1}$,
\item[(ii)] $|C(n)^*| = \sum_{d | n} \mu\left(\frac{n}{d}\right) 2^{d-1}$ and
\item[(iii)] $|C(n)\setminus C(n)^*|=\sum_{d|n, d\not= n} |C(d)^*| =\sum_{d | n, d\not=n} -\mu\left(\frac{n}{d}\right) 2^{d-1}$,
\end{enumerate}
where $\mu$ is the M\"obius function.
\end{cor}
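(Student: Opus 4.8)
The plan is to treat (i) as essentially a restatement and to derive (ii) and (iii) from a single gcd-refinement of $C(n)$ followed by M\"obius inversion.

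For (i), Theorem \ref{dg&c} identifies $C(n)$ with $\mathcal{G}_C(n)$, the collection of subsets of $\B{Z}_n$ containing $0$. Such a subset is determined by an arbitrary choice of subset of the remaining $n-1$ elements, so $|C(n)|=2^{n-1}$, exactly as already noted in the paragraph preceding the statement.

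The structural core is the following decomposition. For each divisor $d\mid n$ I would consider the map sending a composition $\sigma=\sigma_1\sigma_2\cdots\sigma_\ell\in C(n)$ with $\gcd(\sigma)=d$ to the word $(\sigma_1/d)(\sigma_2/d)\cdots(\sigma_\ell/d)$. Because every part is divisible by $d=\gcd(\sigma)$ and $\sum_i\sigma_i=n$, the image is a composition of $n/d$, and dividing out the common factor makes it prime, i.e. an element of $C(n/d)^*$; multiplying every part by $d$ is a two-sided inverse. Hence this map is a bijection from $\{\sigma\in C(n)\mid\gcd(\sigma)=d\}$ onto $C(n/d)^*$. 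Partitioning $C(n)$ according to the value $\gcd(\sigma)$ over the divisors of $n$ and substituting $e=n/d$ then gives
\[ |C(n)|=\sum_{d\mid n}|C(n/d)^*|=\sum_{e\mid n}|C(e)^*|. \]

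Combining this with (i) yields $\sum_{e\mid n}|C(e)^*|=2^{n-1}$. Setting $f(n)=2^{n-1}$ and $g(n)=|C(n)^*|$, this is precisely the relation $f(n)=\sum_{e\mid n}g(e)$, so the M\"obius inversion formula gives $g(n)=\sum_{d\mid n}\mu(n/d)\,2^{d-1}$, which is (ii). For (iii), I would observe that $C(n)^*$ is exactly the $e=n$ block of the partition above; deleting it from $\sum_{e\mid n}|C(e)^*|=|C(n)|$ leaves the first claimed equality $|C(n)\setminus C(n)^*|=\sum_{e\mid n,\,e\ne n}|C(e)^*|$. For the second form, I would instead write $|C(n)\setminus C(n)^*|=|C(n)|-|C(n)^*|=2^{n-1}-\sum_{d\mid n}\mu(n/d)2^{d-1}$ using (i) and (ii), and cancel the $d=n$ summand $\mu(1)2^{n-1}=2^{n-1}$, which leaves $-\sum_{d\mid n,\,d\ne n}\mu(n/d)2^{d-1}$.

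The computations are light, so there is no serious obstacle; the one place to be careful is the gcd-scaling bijection, where I would check that divisibility of each part by $\gcd(\sigma)$ makes the scaled word an honest composition of $n/d$ and that the index change $e=n/d$ is a bijection of the divisor set. After that, the M\"obius inversion and the cancellation in (iii) are routine bookkeeping.
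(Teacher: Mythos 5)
Your proposal is correct and follows essentially the same route as the paper: count $C(n)$ via Theorem \ref{dg&c}, partition $C(n)$ by $\gcd(\sigma)$ using the scaling bijection onto $C(n/d)^*$ to get $|C(n)|=\sum_{d\mid n}|C(d)^*|$, apply M\"obius inversion for (ii), and obtain (iii) by subtraction and cancelling the $d=n$ term. Your write-up simply spells out the inverse of the scaling map more explicitly than the paper does.
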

\begin{proof}
(i): It follows by Theorem \ref{dg&c}.\\
(ii)-(iii): Note that for each $\sigma=\sigma_1\cdots \sigma_{\ell}\in C(n)$ with $\gcd(\sigma)=d\geq 1$, composition $\frac{1}{d}\sigma:=\left(\frac{\sigma_1}{d}\right)\cdots \left(\frac{\sigma_{\ell}}{d}\right)$ satisfies $\frac{1}{d}\sigma\in C\left(\frac{n}{d} \right)^*$. We now observe $|C(n)|=|\cup_{d|n}C(d)^*|=\sum_{d|n}|C(d)^*|$. Now, by applying the M\"obius inversion formula with $|C(d)|=2^{d-1}$ by (i), the result (ii) follows. Now (iii) follows as $|C(n)\setminus C(n)^*|=|C(n)|-|C(n)^*|$ and $\mu(1)=1$.
\end{proof}

Using Corollary~\ref{ancounting}, we enumerate $|C(n)^*|$ and $|C(n)\setminus C(n)^*|$ up to $n=40$ in Table~\ref{abntable}.

\begin{table}
\begin{tabular}{||c|c|c|c|c|c|c|c|c|c|c||}\hline\hline
$n$         & 1 & 2 & 3 & 4 & 5 & 6 & 7 & 8 & 9 & 10 \\ \hline
$|C(n)^*|$ & \footnotesize{1} & \footnotesize{1} & \footnotesize{3} & \footnotesize{6}
&\footnotesize{15} &\footnotesize{27} &\footnotesize{63} &\footnotesize{120}&\footnotesize{252}&\footnotesize{495} \\ \hline
$|C(n)\setminus C(n)^*|$  & \footnotesize{0} & \footnotesize{1} & \footnotesize{1} & \footnotesize{2} & \footnotesize{1}
& \footnotesize{5} & \footnotesize{1} & \footnotesize{8} & \footnotesize{4} & \footnotesize{17} \\ \hline\hline
$n$         & 11 & 12 & 13 & 14 & 15 & 16 & 17 & 18 & 19 & 20 \\ \hline
$|C(n)^*|$ &\footnotesize{1,023}&\footnotesize{2,010}&4,095&8,127&\footnotesize{16,365}&\footnotesize{32,640}
&\footnotesize{65,535}&\footnotesize{130,788}&\footnotesize{262,143}&\footnotesize{523,770} \\ \hline
$|C(n)\setminus C(n)^*|$  & \footnotesize{1}  & \footnotesize{38} & \footnotesize{1}  & \footnotesize{65}
& \footnotesize{19} & \footnotesize{128} & \footnotesize{1} & \footnotesize{284} & \footnotesize{1} & \footnotesize{518} \\ \hline\hline
$n$         & 21 & 22 & 23 & 24 & 25 & 26 & 27 & 28 & 29 & 30 \\ \hline
$|C(n)\setminus C(n)^*|$  & \footnotesize{67} & \footnotesize{1,025} & \footnotesize{1} & \footnotesize{2,168}
& \footnotesize{16} & \footnotesize{4,097} & \footnotesize{256} & \footnotesize{9,198} & \footnotesize{1} & \footnotesize{16,905} \\ \hline\hline
$n$         & 31 & 32 & 33 & 34 & 35 & 36 & 37 & 38 & 39 & 40 \\ \hline
$|C(n)\setminus C(n)^*|$  & \footnotesize{1}&\footnotesize{32,768}&\footnotesize{1,027}& \footnotesize{65,537} & \footnotesize{79} & \footnotesize{133,090}
& \footnotesize{1} & \footnotesize{262,145} & \footnotesize{4,099} & \footnotesize{524,282} \\ \hline
\hline
\end{tabular}
\vskip .2cm
\caption{$|C(n)^*|$ and $|C(n)\setminus C(n)^*|$ up to $n=40$.} \label{abntable}
\end{table}

\begin{exa} \label{enumeration-exa}
The number of connected circulant digraphs
and the number of disconnected circulant digraphs of order $72$ are $23,611,832,414,004,545,432,040$
and $34,368,074,808$, respectively.
\begin{proof}
The prime factorization of $72$ is $2^3 \cdot 3^2$. There are
$11$ proper divisors, $1$, $2$, $3$, $4$, $6$, $8$, $9$, $12$,
$18$, $24$ and $36$. In Table~\ref{abntable}, we find their $|C(k)^*|$
as $1$, $1$, $3$, $6$, $25$, $120$, $252$, $2012$, $130790$, $8336440$ and
$34359605278$. By summing all, we have $|C(72)\setminus C(72)^*|=34,368,074,808$ and
$|C(72)^*|=23,611,832,414,004,545,432,040$.
\end{proof}
\end{exa}

\section{One-to-one correspondence between the palindromes of $n$ and the circulant graphs of order $n$}\label{undirected}
In this section, we first show that there is a bijection between the set of palindromes of $n$ and the set of circulant graphs of order $n$ in Theorem \ref{palthm}. In particular, we also show that this bijection in Theorem \ref{palthm} guarantees a one-to-one correspondence between the set of aperiodic palindromes of $n$ and the set of connected circulant graphs of order $n$ (see Theorem \ref{apn-thm}). \\

For each integer $n\geq 2$, we define
\begin{equation*}
\mathcal{G}_P(n) = \{ \Omega \in \mathcal{G}_C(n) \mid \Omega=\Omega^{-1} \} \mbox{~and~}
P(n) = \{ \sigma\in C(n) \mid \sigma=\sigma^{-1}\}.
\end{equation*}
Note that the set $P(n)$ is the set of palindromes of $n$. Without loss of generality, each element $\Omega \in \mathcal{G}_P(n)$ with $|\Omega|=t\geq 2$ is denoted
 by a set $\Omega=\{ a_1, a_2, \ldots, a_t\}$ satisfying
 \begin{equation}\label{gpn}
 0=a_1<a_2<\cdots<a_t \mbox{~and~} a_i+a_{t+2-i}=n,~~i=2,\ldots, t.
\end{equation}

In the following theorem, we will show that the construction $\sigma_{\Omega}$ in (\ref{sigmaS})
for an element $\Omega$ of $\mathcal{G}_P(n)$ guarantees one-to-one correspondence between $\mathcal{G}_P(n)$ and $P(n)$.

\begin{thm} \label{palthm}
For each integer $n\geq 2$, there is a one-to-one correspondence between the set of palindromes of $n$ and the set of circulant graphs of order $n$.
\end{thm}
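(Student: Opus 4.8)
The plan is to show that the bijection $\psi$ of Theorem \ref{dg&c} restricts to a bijection from $\mathcal{G}_P(n)$ onto $P(n)$, and then to invoke the identification (already used for the digraph case in the paragraph preceding Theorem \ref{dg&c}) of the negation-symmetric sets in $\mathcal{G}_P(n)$ with circulant graphs of order $n$. Since $\psi$ is injective by Theorem \ref{dg&c}, its restriction to $\mathcal{G}_P(n)$ is automatically injective, so the whole statement reduces to the single assertion that $\psi$ intertwines the two natural involutions: negation $\Omega \mapsto \Omega^{-1}=-\Omega \bmod n$ on $\mathcal{G}_C(n)$ and reversal $\sigma \mapsto \sigma^{-1}$ on $C(n)$. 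Concretely, I would prove the equivariance relation
\[
\sigma_{\Omega^{-1}} = (\sigma_{\Omega})^{-1} \qquad \text{for every } \Omega \in \mathcal{G}_C(n).
\]
Granting this, $\Omega=\Omega^{-1}$ holds if and only if $\sigma_\Omega = \sigma_{\Omega^{-1}} = (\sigma_\Omega)^{-1}$, that is, if and only if $\sigma_\Omega$ is a palindrome; combined with the surjectivity of $\psi$ (if $\sigma\in P(n)$ then $\Omega_\sigma \in \mathcal{G}_P(n)$, by the same relation applied to $\Omega_\sigma$) this yields the desired restriction.

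The heart of the argument is the computation establishing this equivariance relation, and it is the step I expect to demand the most care. Writing $\Omega=\{a_1,\dots,a_t\}$ with $0=a_1<\cdots<a_t$, negation fixes $a_1=0$ and sends each $a_i$ to $n-a_i$, so after re-sorting in increasing order one gets $\Omega^{-1}=\{b_1,\dots,b_t\}$ with $b_1=0$ and $b_j=n-a_{t+2-j}$ for $2\le j\le t$. Substituting these into the definition (\ref{sigmaS}) of the associated composition and simplifying the telescoping differences, I expect to obtain $\omega'_j = \omega_{t+1-j}$ for all $j$ (the boundary parts $\omega_t = n-a_t$ and $\omega_1 = a_2$ reappearing, respectively, as the first and last parts of $\sigma_{\Omega^{-1}}$), which is precisely $\sigma_{\Omega^{-1}}=(\sigma_\Omega)^{-1}$. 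The delicate point is the index bookkeeping: one must verify that negation followed by re-sorting produces an honest reversal of the gap sequence with no cyclic shift, and that the extremal parts are treated correctly. The fact that $0$ is a fixed point of negation is exactly what prevents a rotation from appearing, and the condition (\ref{gpn}) recorded for $\mathcal{G}_P(n)$ is the trace of this symmetry at the level of the $a_i$.

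Finally, I would assemble the pieces. The relation just proved shows that $\psi|_{\mathcal{G}_P(n)}$ is well-defined into $P(n)$ and surjective, hence a bijection between $\mathcal{G}_P(n)$ and the palindromes $P(n)$. On the other side, a set $\Omega \in \mathcal{G}_C(n)$ lies in $\mathcal{G}_P(n)$ precisely when its connection set $\Omega\setminus\{0\}$ satisfies $S=-S \bmod n$, which is exactly the defining symmetry of a circulant graph; thus, restricting the digraph correspondence used before Theorem \ref{dg&c} to symmetric connection sets puts $\mathcal{G}_P(n)$ in one-to-one correspondence with the circulant graphs of order $n$. Composing the two bijections gives the claimed one-to-one correspondence between palindromes of $n$ and circulant graphs of order $n$. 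I would also note in passing that the singleton $\Omega=\{0\}$ (giving the one-part palindrome $\sigma=n$ and the edgeless graph) is consistent with condition (\ref{gpn}) and needs no separate treatment.
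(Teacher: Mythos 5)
Your proposal is correct, and it follows the paper's overall strategy of restricting the bijection $\psi$ of Theorem \ref{dg&c} to $\mathcal{G}_P(n)$; the difference is in how the restriction is justified. The paper performs two separate fixed-point verifications: first, assuming $\Omega=\Omega^{-1}$ (i.e., the symmetry $a_i+a_{t+2-i}=n$ of (\ref{gpn})), it checks $\omega_{t+1-i}=\omega_i$ so that $\sigma_\Omega\in P(n)$; second, assuming $\sigma=\sigma^{-1}$, it checks $a_i+a_{m+2-i}=n$ for $\Omega_\sigma$ to get surjectivity. You instead prove the single, more general equivariance identity $\sigma_{\Omega^{-1}}=(\sigma_\Omega)^{-1}$ valid for every $\Omega\in\mathcal{G}_C(n)$, and your index bookkeeping checks out: with $b_1=0$ and $b_j=n-a_{t+2-j}$ one gets $\omega'_1=n-a_t=\omega_t$, $\omega'_j=a_{t+2-j}-a_{t+1-j}=\omega_{t+1-j}$ for $2\le j\le t-1$, and $\omega'_t=a_2=\omega_1$, with the fixed point $0$ preventing any cyclic shift exactly as you say. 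Since $\psi$ is injective, this one identity yields both directions at once ($\Omega=\Omega^{-1}$ iff $\sigma_\Omega=(\sigma_\Omega)^{-1}$, and for $\sigma\in P(n)$ the identity applied to $\Omega_\sigma$ forces $\Omega_\sigma\in\mathcal{G}_P(n)$), so you save the paper's second computation at the cost of tracking the re-sorting of $-\Omega$ in general position rather than under the symmetry (\ref{gpn}). Both arguments are essentially the same telescoping calculation; yours packages it as an intertwining statement, which is slightly more economical and would also be reusable elsewhere, while the paper's is more pedestrian but requires no appeal to injectivity in the surjectivity step.
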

\begin{proof}
To prove the result, we will show that the map $\psi|_{\mathcal{G}_P(n)}: \mathcal{G}_P(n) \longrightarrow P(n)$ defined by $\psi|_{\mathcal{G}_P(n)}(\Omega)=\psi(\Omega)=\sigma_{\Omega}$ is a bijection, where $\psi : \mathcal{G}_C(n) \longrightarrow C(n)$ is the bijection defined in Theorem \ref{dg&c}. To show that the map $\psi|_{\mathcal{G}_P(n)}$ is well-defined and injective, it is enough to show $\sigma_{\Omega}=\sigma_{\Omega}^{-1}$ for each $\Omega=\{ a_1, \ldots, a_t\} \in \mathcal{G}_P(n)$ as $\psi$ is bijective. If $t=1$ then $\Omega=\{a_1=0\}$ and $\sigma_{\Omega}=n=\sigma_{\Omega}^{-1}$. Now let $t\geq 2$ and $\sigma_{\Omega}=\omega_1\cdots \omega_{t}$ in (\ref{sigmaS}). As $\omega_{t}=n-a_t=a_2=\omega_1$ and $\omega_{t+1-i}=a_{t+2-i}-a_{t+1-i}=(n-a_{i})-(n-a_{i+1})=a_{i+1}-a_{i}=\omega_i ~~(i=2,\ldots,t-1)$ follow by (\ref{sigmaS}) and (\ref{gpn}), we find $\sigma_{\Omega}=\sigma_{\Omega}^{-1}$. Let  $\sigma=\sigma_1\cdots \sigma_m$ be an element in $P(n)$ and let $\Omega_{\sigma}=\{a_1,\ldots,a_{m} \}$ (see (\ref{Ssigma})). To show $\psi|_{\mathcal{G}_P(n)}$ is surjective, we will show $\Omega_{\sigma}=\Omega_{\sigma}^{-1}$ as $\Omega_{\sigma}\in \mathcal{G}_C(n)$ and $\psi(\Omega_{\sigma})=\sigma$ in Theorem \ref{dg&c}. If $m=1$ then $\sigma=n$ and thus we find $\Omega_{\sigma}=\{0\}=\Omega_{\sigma}^{-1}$. Let $m\geq 2$. As $\sigma \in P(n)$, $\sigma_i=\sigma_{m+1-i}$
for all $i=1,\ldots,m$ and $\sum_{j=1}^{m}\sigma_j=n$. Thus $a_1=0$, $a_2+a_{m}=\sigma_1+\sum_{j=1}^{m-1}\sigma_j=\sum_{j=1}^{m}\sigma_j=n$ by (\ref{Ssigma}), and 
$$a_i+a_{m+2-i}
=\sum_{j=1}^{i-1}\sigma_j +\sum_{k=1}^{m+1-i}\sigma_k
=\sum_{j=1}^{i-1}\sigma_j +\sum_{k=1}^{m+1-i}\sigma_{m+1-k}
=\sum_{j=1}^{m}\sigma_j=n,~~i=2,\ldots,m.
$$
This shows $\Omega_{\sigma}=\Omega_{\sigma}^{-1}$, and thus the map $\psi|_{\mathcal{G}_P(n)}$ is surjective. This completes the proof.
\end{proof}

As we mentioned earlier, it is already known that the number of palindromes of $n\geq 2$
is $2^{\lfloor\frac{n}{2} \rfloor}$ (see \cite{HCG}). By using Theorem \ref{palthm}, we have very short and elementary proof for the same statement.

\begin{cor} \label{counting}
For each integer $n\geq 2$, the number of palindromes of $n$ is $2^{\lfloor\frac{n}{2} \rfloor}$.
\end{cor}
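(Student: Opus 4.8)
The plan is to count palindromes indirectly through the bijection already established rather than combinatorially from scratch. By Theorem \ref{palthm} the map $\psi|_{\mathcal{G}_P(n)}$ is a bijection between $\mathcal{G}_P(n)$ and $P(n)$, so it suffices to determine $|\mathcal{G}_P(n)|$, the number of subsets $\Omega \subseteq \BZ_n$ with $0 \in \Omega$ and $\Omega = \Omega^{-1}$. Unwinding the defining condition (\ref{gpn}), which reads $a_i + a_{t+2-i} = n$, an element of $\mathcal{G}_P(n)$ is exactly a subset of $\BZ_n$ that contains $0$ and is invariant under the negation map $x \mapsto -x \pmod n$ (since $n - a_i \equiv -a_i \pmod n$). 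So I would recast the problem as counting such symmetric subsets.

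First I would analyze the involution $\iota \colon \BZ_n \to \BZ_n$, $\iota(x) = -x \pmod n$. Its fixed points are the solutions of $2x \equiv 0 \pmod n$: namely $\{0\}$ when $n$ is odd and $\{0, n/2\}$ when $n$ is even. All remaining elements split into two-element orbits $\{x, -x\}$. A subset $\Omega$ satisfies $\Omega = \Omega^{-1}$ if and only if it is a union of fixed points and full orbits; the constraint $0 \in \Omega$ forces the fixed point $0$ to be included, while every other fixed point and every size-two orbit may be included or omitted independently. Hence $|\mathcal{G}_P(n)|$ equals $2$ raised to the number of such free choices.

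Finally I would carry out the count. A convenient way to package both parities at once is to observe that the total number of $\iota$-orbits on $\BZ_n$ (counting each fixed point as an orbit of size one) is $\lfloor n/2 \rfloor + 1$: indeed, for odd $n$ there is one fixed point and $(n-1)/2$ orbits of size two, while for even $n$ there are two fixed points and $(n-2)/2$ orbits of size two, and in both cases the total is $\lfloor n/2 \rfloor + 1$. Exactly one of these orbits, namely $\{0\}$, is forced into $\Omega$, leaving $\lfloor n/2 \rfloor$ independent binary choices. Therefore $|\mathcal{G}_P(n)| = 2^{\lfloor n/2 \rfloor}$, and by Theorem \ref{palthm} the number of palindromes of $n$ is $2^{\lfloor n/2 \rfloor}$.

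The step I expect to require the most care is the parity bookkeeping: the entire appearance of the floor function stems from the extra fixed point $n/2$ that is present only when $n$ is even, and one must be careful to treat $0$ as a \emph{forced} fixed point while $n/2$ is a \emph{free} one. Verifying the identity that the orbit count is $\lfloor n/2 \rfloor + 1$ uniformly in $n$ is the crux; once that is in hand, subtracting the single forced orbit yields the exponent $\lfloor n/2 \rfloor$ directly, avoiding any case split in the final formula.
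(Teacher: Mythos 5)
Your argument is correct and follows essentially the same route as the paper: both reduce the count to $|\mathcal{G}_P(n)|$ via the bijection of Theorem~\ref{palthm} and then count negation-invariant subsets containing $0$ by grouping $\BZ_n\setminus\{0\}$ into the $\lfloor n/2\rfloor$ sets $\{i,n-i\}$, each freely included or excluded. Your version merely makes the parity bookkeeping (the fixed point $n/2$ for even $n$) more explicit than the paper does.
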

\begin{proof}
Let $n\geq 2$ be an integer and consider $\lfloor\frac{n}{2} \rfloor$ sets $\{i, n-i\}$, $i=1,\ldots, \lfloor\frac{n}{2} \rfloor $. As $\Omega^{-1}=\{n-i \mid i\in \Omega\}$,  the cardinality of the set $\mathcal{G}_P(n) = \{ \Omega \in \mathcal{G}_C(n) \mid \Omega=\Omega^{-1} \}$ is $2^{\lfloor\frac{n}{2} \rfloor}$. As the number of palindromes of $n$ is equal to $|\mathcal{G}_P(n)|$ by Theorem~\ref{palthm}, the result follows.
\end{proof}

\begin{exa} \label{aperiodic8exa}
In Table~\ref{pal8exafig}, the circulant graphs of order $8$ and the corresponding palindromes in Theorem \ref{palthm} are presented.
\end{exa}

\begin{table}
\begin{tabular}{||c|c|c||c|c|c||}\hline
$\Omega \in \mathcal{G}_P(8)$ & $G(8, \Omega)$ & $\sigma_{\Omega}\in P(8)$ & $\Omega \in \mathcal{G}_P(8)$  & $G(8, \Omega)$ &  $\sigma_{\Omega}\in P(8)$ \\
\hline$\{0\}$ & $\begin{pspicture}[shift=-1](-1.4,-1.3)(1.4,1.3)
\pscircle[fillcolor=lightgray, fillstyle=solid, linewidth=1pt](1;22.5){.2}
\pscircle[fillcolor=lightgray, fillstyle=solid, linewidth=1pt](1;67.5){.2}
\pscircle[fillcolor=lightgray, fillstyle=solid, linewidth=1pt](1;112.5){.2}
\pscircle[fillcolor=lightgray, fillstyle=solid, linewidth=1pt](1;157.5){.2}
\pscircle[fillcolor=lightgray, fillstyle=solid, linewidth=1pt](1;202.5){.2}
\pscircle[fillcolor=lightgray, fillstyle=solid, linewidth=1pt](1;247.5){.2}
\pscircle[fillcolor=lightgray, fillstyle=solid, linewidth=1pt](1;292.5){.2}
\pscircle[fillcolor=lightgray, fillstyle=solid, linewidth=1pt](1;337.5){.2}
\rput(1;22.5){{\footnotesize{7}}}\rput(1;67.5){{{\footnotesize{6}}}}\rput(1;112.5){{{\footnotesize{5}}}}
\rput(1;157.5){{\footnotesize{4}}}\rput(1;202.5){{\footnotesize{3}}}\rput(1;247.5){{\footnotesize{2}}}
\rput(1;292.5){{\footnotesize{1}}}\rput(1;337.5){{\footnotesize{0}}}
\end{pspicture}$ &  $8$  &
 $\{0,1,2,6,7\}$  &$\begin{pspicture}[shift=-1](-1.4,-1.3)(1.4,1.3)
\psline(1;22.5)(1;67.5)(1;112.5)(1;157.5)(1;202.5)(1;247.5)(1;292.5)(1;337.5)(1;22.5)
\psline(1;22.5)(1;112.5)(1;202.5)(1;292.5)(1;22.5)
\psline(1;67.5)(1;157.5)(1;247.5)(1;337.5)(1;67.5)
\pscircle[fillcolor=lightgray, fillstyle=solid, linewidth=1pt](1;22.5){.2}
\pscircle[fillcolor=lightgray, fillstyle=solid, linewidth=1pt](1;67.5){.2}
\pscircle[fillcolor=lightgray, fillstyle=solid, linewidth=1pt](1;112.5){.2}
\pscircle[fillcolor=lightgray, fillstyle=solid, linewidth=1pt](1;157.5){.2}
\pscircle[fillcolor=lightgray, fillstyle=solid, linewidth=1pt](1;202.5){.2}
\pscircle[fillcolor=lightgray, fillstyle=solid, linewidth=1pt](1;247.5){.2}
\pscircle[fillcolor=lightgray, fillstyle=solid, linewidth=1pt](1;292.5){.2}
\pscircle[fillcolor=lightgray, fillstyle=solid, linewidth=1pt](1;337.5){.2}
\rput(1;22.5){{\footnotesize{7}}}\rput(1;67.5){{{\footnotesize{6}}}}\rput(1;112.5){{{\footnotesize{5}}}}
\rput(1;157.5){{\footnotesize{4}}}\rput(1;202.5){{\footnotesize{3}}}\rput(1;247.5){{\footnotesize{2}}}
\rput(1;292.5){{\footnotesize{1}}}\rput(1;337.5){{\footnotesize{0}}}
\end{pspicture}$ & $11411$ \\
\hline $\{0,4\}$  & $\begin{pspicture}[shift=-1](-1.4,-1.3)(1.4,1.3)
\psline(1;22.5)(1;202.5)\psline(1;67.5)(1;247.5)\psline(1;112.5)(1;292.5)\psline(1;157.5)(1;337.5)
\pscircle[fillcolor=lightgray, fillstyle=solid, linewidth=1pt](1;22.5){.2}
\pscircle[fillcolor=lightgray, fillstyle=solid, linewidth=1pt](1;67.5){.2}
\pscircle[fillcolor=lightgray, fillstyle=solid, linewidth=1pt](1;112.5){.2}
\pscircle[fillcolor=lightgray, fillstyle=solid, linewidth=1pt](1;157.5){.2}
\pscircle[fillcolor=lightgray, fillstyle=solid, linewidth=1pt](1;202.5){.2}
\pscircle[fillcolor=lightgray, fillstyle=solid, linewidth=1pt](1;247.5){.2}
\pscircle[fillcolor=lightgray, fillstyle=solid, linewidth=1pt](1;292.5){.2}
\pscircle[fillcolor=lightgray, fillstyle=solid, linewidth=1pt](1;337.5){.2}
\rput(1;22.5){{\footnotesize{7}}}\rput(1;67.5){{{\footnotesize{6}}}}\rput(1;112.5){{{\footnotesize{5}}}}
\rput(1;157.5){{\footnotesize{4}}}\rput(1;202.5){{\footnotesize{3}}}\rput(1;247.5){{\footnotesize{2}}}
\rput(1;292.5){{\footnotesize{1}}}\rput(1;337.5){{\footnotesize{0}}}
\end{pspicture}$ &  $(4)^2$   & $\{0,1,3,5,7\}$ &
$\begin{pspicture}[shift=-1](-1.4,-1.3)(1.4,1.3)
\psline(1;22.5)(1;67.5)(1;112.5)(1;157.5)(1;202.5)(1;247.5)(1;292.5)(1;337.5)(1;22.5)
\psline(1;22.5)(1;157.5)(1;292.5)(1;67.5)(1;202.5)(1;337.5)(1;112.5)(1;247.5)(1;22.5)
\pscircle[fillcolor=lightgray, fillstyle=solid, linewidth=1pt](1;22.5){.2}
\pscircle[fillcolor=lightgray, fillstyle=solid, linewidth=1pt](1;67.5){.2}
\pscircle[fillcolor=lightgray, fillstyle=solid, linewidth=1pt](1;112.5){.2}
\pscircle[fillcolor=lightgray, fillstyle=solid, linewidth=1pt](1;157.5){.2}
\pscircle[fillcolor=lightgray, fillstyle=solid, linewidth=1pt](1;202.5){.2}
\pscircle[fillcolor=lightgray, fillstyle=solid, linewidth=1pt](1;247.5){.2}
\pscircle[fillcolor=lightgray, fillstyle=solid, linewidth=1pt](1;292.5){.2}
\pscircle[fillcolor=lightgray, fillstyle=solid, linewidth=1pt](1;337.5){.2}
\rput(1;22.5){{\footnotesize{7}}}\rput(1;67.5){{{\footnotesize{6}}}}\rput(1;112.5){{{\footnotesize{5}}}}
\rput(1;157.5){{\footnotesize{4}}}\rput(1;202.5){{\footnotesize{3}}}\rput(1;247.5){{\footnotesize{2}}}
\rput(1;292.5){{\footnotesize{1}}}\rput(1;337.5){{\footnotesize{0}}}
\end{pspicture}$ &   $12221$ \\
\hline  $\{0,1,7\}$  &$\begin{pspicture}[shift=-1](-1.4,-1.3)(1.4,1.3)
\psline(1;22.5)(1;67.5)(1;112.5)(1;157.5)(1;202.5)(1;247.5)(1;292.5)(1;337.5)(1;22.5)
\pscircle[fillcolor=lightgray, fillstyle=solid, linewidth=1pt](1;22.5){.2}
\pscircle[fillcolor=lightgray, fillstyle=solid, linewidth=1pt](1;67.5){.2}
\pscircle[fillcolor=lightgray, fillstyle=solid, linewidth=1pt](1;112.5){.2}
\pscircle[fillcolor=lightgray, fillstyle=solid, linewidth=1pt](1;157.5){.2}
\pscircle[fillcolor=lightgray, fillstyle=solid, linewidth=1pt](1;202.5){.2}
\pscircle[fillcolor=lightgray, fillstyle=solid, linewidth=1pt](1;247.5){.2}
\pscircle[fillcolor=lightgray, fillstyle=solid, linewidth=1pt](1;292.5){.2}
\pscircle[fillcolor=lightgray, fillstyle=solid, linewidth=1pt](1;337.5){.2}
\rput(1;22.5){{\footnotesize{7}}}\rput(1;67.5){{{\footnotesize{6}}}}\rput(1;112.5){{{\footnotesize{5}}}}
\rput(1;157.5){{\footnotesize{4}}}\rput(1;202.5){{\footnotesize{3}}}\rput(1;247.5){{\footnotesize{2}}}
\rput(1;292.5){{\footnotesize{1}}}\rput(1;337.5){{\footnotesize{0}}}
\end{pspicture}$ &  $161$   &
 $\{0,2,3,5,6\}$  & $\begin{pspicture}[shift=-1](-1.4,-1.3)(1.4,1.3)
\psline(1;22.5)(1;112.5)(1;202.5)(1;292.5)(1;22.5)
\psline(1;67.5)(1;157.5)(1;247.5)(1;337.5)(1;67.5)
\psline(1;22.5)(1;157.5)(1;292.5)(1;67.5)(1;202.5)(1;337.5)(1;112.5)(1;247.5)(1;22.5)
\pscircle[fillcolor=lightgray, fillstyle=solid, linewidth=1pt](1;22.5){.2}
\pscircle[fillcolor=lightgray, fillstyle=solid, linewidth=1pt](1;67.5){.2}
\pscircle[fillcolor=lightgray, fillstyle=solid, linewidth=1pt](1;112.5){.2}
\pscircle[fillcolor=lightgray, fillstyle=solid, linewidth=1pt](1;157.5){.2}
\pscircle[fillcolor=lightgray, fillstyle=solid, linewidth=1pt](1;202.5){.2}
\pscircle[fillcolor=lightgray, fillstyle=solid, linewidth=1pt](1;247.5){.2}
\pscircle[fillcolor=lightgray, fillstyle=solid, linewidth=1pt](1;292.5){.2}
\pscircle[fillcolor=lightgray, fillstyle=solid, linewidth=1pt](1;337.5){.2}
\rput(1;22.5){{\footnotesize{7}}}\rput(1;67.5){{{\footnotesize{6}}}}\rput(1;112.5){{{\footnotesize{5}}}}
\rput(1;157.5){{\footnotesize{4}}}\rput(1;202.5){{\footnotesize{3}}}\rput(1;247.5){{\footnotesize{2}}}
\rput(1;292.5){{\footnotesize{1}}}\rput(1;337.5){{\footnotesize{0}}}
\end{pspicture}$ &  $21212$  \\
\hline   $\{0,2,6\}$   & $\begin{pspicture}[shift=-1](-1.4,-1.3)(1.4,1.3)
\psline(1;22.5)(1;112.5)(1;202.5)(1;292.5)(1;22.5)
\psline(1;67.5)(1;157.5)(1;247.5)(1;337.5)(1;67.5)
\pscircle[fillcolor=lightgray, fillstyle=solid, linewidth=1pt](1;22.5){.2}
\pscircle[fillcolor=lightgray, fillstyle=solid, linewidth=1pt](1;67.5){.2}
\pscircle[fillcolor=lightgray, fillstyle=solid, linewidth=1pt](1;112.5){.2}
\pscircle[fillcolor=lightgray, fillstyle=solid, linewidth=1pt](1;157.5){.2}
\pscircle[fillcolor=lightgray, fillstyle=solid, linewidth=1pt](1;202.5){.2}
\pscircle[fillcolor=lightgray, fillstyle=solid, linewidth=1pt](1;247.5){.2}
\pscircle[fillcolor=lightgray, fillstyle=solid, linewidth=1pt](1;292.5){.2}
\pscircle[fillcolor=lightgray, fillstyle=solid, linewidth=1pt](1;337.5){.2}
\rput(1;22.5){{\footnotesize{7}}}\rput(1;67.5){{{\footnotesize{6}}}}\rput(1;112.5){{{\footnotesize{5}}}}
\rput(1;157.5){{\footnotesize{4}}}\rput(1;202.5){{\footnotesize{3}}}\rput(1;247.5){{\footnotesize{2}}}
\rput(1;292.5){{\footnotesize{1}}}\rput(1;337.5){{\footnotesize{0}}}
\end{pspicture}$ &$242$   & $\{0,1,2,4,6,7\}$ &
$\begin{pspicture}[shift=-1](-1.4,-1.3)(1.4,1.3)
\psline(1;22.5)(1;67.5)(1;112.5)(1;157.5)(1;202.5)(1;247.5)(1;292.5)(1;337.5)(1;22.5)
\psline(1;22.5)(1;112.5)(1;202.5)(1;292.5)(1;22.5)
\psline(1;67.5)(1;157.5)(1;247.5)(1;337.5)(1;67.5)
\psline(1;22.5)(1;202.5)\psline(1;67.5)(1;247.5)\psline(1;112.5)(1;292.5)\psline(1;157.5)(1;337.5)
\pscircle[fillcolor=lightgray, fillstyle=solid, linewidth=1pt](1;22.5){.2}
\pscircle[fillcolor=lightgray, fillstyle=solid, linewidth=1pt](1;67.5){.2}
\pscircle[fillcolor=lightgray, fillstyle=solid, linewidth=1pt](1;112.5){.2}
\pscircle[fillcolor=lightgray, fillstyle=solid, linewidth=1pt](1;157.5){.2}
\pscircle[fillcolor=lightgray, fillstyle=solid, linewidth=1pt](1;202.5){.2}
\pscircle[fillcolor=lightgray, fillstyle=solid, linewidth=1pt](1;247.5){.2}
\pscircle[fillcolor=lightgray, fillstyle=solid, linewidth=1pt](1;292.5){.2}
\pscircle[fillcolor=lightgray, fillstyle=solid, linewidth=1pt](1;337.5){.2}
\rput(1;22.5){{\footnotesize{7}}}\rput(1;67.5){{{\footnotesize{6}}}}\rput(1;112.5){{{\footnotesize{5}}}}
\rput(1;157.5){{\footnotesize{4}}}\rput(1;202.5){{\footnotesize{3}}}\rput(1;247.5){{\footnotesize{2}}}
\rput(1;292.5){{\footnotesize{1}}}\rput(1;337.5){{\footnotesize{0}}}
\end{pspicture}$ &   $112211$  \\
\hline $\{0,3,5\}$ & $\begin{pspicture}[shift=-1](-1.4,-1.3)(1.4,1.3)
\psline(1;22.5)(1;157.5)(1;292.5)(1;67.5)(1;202.5)(1;337.5)(1;112.5)(1;247.5)(1;22.5)
\pscircle[fillcolor=lightgray, fillstyle=solid, linewidth=1pt](1;22.5){.2}
\pscircle[fillcolor=lightgray, fillstyle=solid, linewidth=1pt](1;67.5){.2}
\pscircle[fillcolor=lightgray, fillstyle=solid, linewidth=1pt](1;112.5){.2}
\pscircle[fillcolor=lightgray, fillstyle=solid, linewidth=1pt](1;157.5){.2}
\pscircle[fillcolor=lightgray, fillstyle=solid, linewidth=1pt](1;202.5){.2}
\pscircle[fillcolor=lightgray, fillstyle=solid, linewidth=1pt](1;247.5){.2}
\pscircle[fillcolor=lightgray, fillstyle=solid, linewidth=1pt](1;292.5){.2}
\pscircle[fillcolor=lightgray, fillstyle=solid, linewidth=1pt](1;337.5){.2}
\rput(1;22.5){{\footnotesize{7}}}\rput(1;67.5){{{\footnotesize{6}}}}\rput(1;112.5){{{\footnotesize{5}}}}
\rput(1;157.5){{\footnotesize{4}}}\rput(1;202.5){{\footnotesize{3}}}\rput(1;247.5){{\footnotesize{2}}}
\rput(1;292.5){{\footnotesize{1}}}\rput(1;337.5){{\footnotesize{0}}}
\end{pspicture}$ &  $323$   &  $\{0,1,3,4,5,7\}$  &
$\begin{pspicture}[shift=-1](-1.4,-1.3)(1.4,1.3)
\psline(1;22.5)(1;67.5)(1;112.5)(1;157.5)(1;202.5)(1;247.5)(1;292.5)(1;337.5)(1;22.5)
\psline(1;22.5)(1;157.5)(1;292.5)(1;67.5)(1;202.5)(1;337.5)(1;112.5)(1;247.5)(1;22.5)
\psline(1;22.5)(1;202.5)\psline(1;67.5)(1;247.5)\psline(1;112.5)(1;292.5)\psline(1;157.5)(1;337.5)
\pscircle[fillcolor=lightgray, fillstyle=solid, linewidth=1pt](1;22.5){.2}
\pscircle[fillcolor=lightgray, fillstyle=solid, linewidth=1pt](1;67.5){.2}
\pscircle[fillcolor=lightgray, fillstyle=solid, linewidth=1pt](1;112.5){.2}
\pscircle[fillcolor=lightgray, fillstyle=solid, linewidth=1pt](1;157.5){.2}
\pscircle[fillcolor=lightgray, fillstyle=solid, linewidth=1pt](1;202.5){.2}
\pscircle[fillcolor=lightgray, fillstyle=solid, linewidth=1pt](1;247.5){.2}
\pscircle[fillcolor=lightgray, fillstyle=solid, linewidth=1pt](1;292.5){.2}
\pscircle[fillcolor=lightgray, fillstyle=solid, linewidth=1pt](1;337.5){.2}
\rput(1;22.5){{\footnotesize{7}}}\rput(1;67.5){{{\footnotesize{6}}}}\rput(1;112.5){{{\footnotesize{5}}}}
\rput(1;157.5){{\footnotesize{4}}}\rput(1;202.5){{\footnotesize{3}}}\rput(1;247.5){{\footnotesize{2}}}
\rput(1;292.5){{\footnotesize{1}}}\rput(1;337.5){{\footnotesize{0}}}
\end{pspicture}$ & $(121)^2$   \\
\hline  $\{0,2,4,6\}$ & $\begin{pspicture}[shift=-1](-1.4,-1.3)(1.4,1.3)
\psline(1;22.5)(1;112.5)(1;202.5)(1;292.5)(1;22.5)
\psline(1;67.5)(1;157.5)(1;247.5)(1;337.5)(1;67.5)
\psline(1;22.5)(1;202.5)\psline(1;67.5)(1;247.5)\psline(1;112.5)(1;292.5)\psline(1;157.5)(1;337.5)
\pscircle[fillcolor=lightgray, fillstyle=solid, linewidth=1pt](1;22.5){.2}
\pscircle[fillcolor=lightgray, fillstyle=solid, linewidth=1pt](1;67.5){.2}
\pscircle[fillcolor=lightgray, fillstyle=solid, linewidth=1pt](1;112.5){.2}
\pscircle[fillcolor=lightgray, fillstyle=solid, linewidth=1pt](1;157.5){.2}
\pscircle[fillcolor=lightgray, fillstyle=solid, linewidth=1pt](1;202.5){.2}
\pscircle[fillcolor=lightgray, fillstyle=solid, linewidth=1pt](1;247.5){.2}
\pscircle[fillcolor=lightgray, fillstyle=solid, linewidth=1pt](1;292.5){.2}
\pscircle[fillcolor=lightgray, fillstyle=solid, linewidth=1pt](1;337.5){.2}
\rput(1;22.5){{\footnotesize{7}}}\rput(1;67.5){{{\footnotesize{6}}}}\rput(1;112.5){{{\footnotesize{5}}}}
\rput(1;157.5){{\footnotesize{4}}}\rput(1;202.5){{\footnotesize{3}}}\rput(1;247.5){{\footnotesize{2}}}
\rput(1;292.5){{\footnotesize{1}}}\rput(1;337.5){{\footnotesize{0}}}
\end{pspicture}$ &  $(2)^4$  &
 $\{0,2,3,4,5,6\}$   & $\begin{pspicture}[shift=-1](-1.4,-1.3)(1.4,1.3)
\psline(1;22.5)(1;112.5)(1;202.5)(1;292.5)(1;22.5)
\psline(1;67.5)(1;157.5)(1;247.5)(1;337.5)(1;67.5)
\psline(1;22.5)(1;157.5)(1;292.5)(1;67.5)(1;202.5)(1;337.5)(1;112.5)(1;247.5)(1;22.5)
\psline(1;22.5)(1;202.5)\psline(1;67.5)(1;247.5)\psline(1;112.5)(1;292.5)\psline(1;157.5)(1;337.5)
\pscircle[fillcolor=lightgray, fillstyle=solid, linewidth=1pt](1;22.5){.2}
\pscircle[fillcolor=lightgray, fillstyle=solid, linewidth=1pt](1;67.5){.2}
\pscircle[fillcolor=lightgray, fillstyle=solid, linewidth=1pt](1;112.5){.2}
\pscircle[fillcolor=lightgray, fillstyle=solid, linewidth=1pt](1;157.5){.2}
\pscircle[fillcolor=lightgray, fillstyle=solid, linewidth=1pt](1;202.5){.2}
\pscircle[fillcolor=lightgray, fillstyle=solid, linewidth=1pt](1;247.5){.2}
\pscircle[fillcolor=lightgray, fillstyle=solid, linewidth=1pt](1;292.5){.2}
\pscircle[fillcolor=lightgray, fillstyle=solid, linewidth=1pt](1;337.5){.2}
\rput(1;22.5){{\footnotesize{7}}}\rput(1;67.5){{{\footnotesize{6}}}}\rput(1;112.5){{{\footnotesize{5}}}}
\rput(1;157.5){{\footnotesize{4}}}\rput(1;202.5){{\footnotesize{3}}}\rput(1;247.5){{\footnotesize{2}}}
\rput(1;292.5){{\footnotesize{1}}}\rput(1;337.5){{\footnotesize{0}}}
\end{pspicture}$ & $211112$   \\
\hline $\{0,1,4,7\}$  &  $\begin{pspicture}[shift=-1](-1.4,-1.3)(1.4,1.3)
\psline(1;22.5)(1;67.5)(1;112.5)(1;157.5)(1;202.5)(1;247.5)(1;292.5)(1;337.5)(1;22.5)
\psline(1;22.5)(1;202.5)\psline(1;67.5)(1;247.5)\psline(1;112.5)(1;292.5)\psline(1;157.5)(1;337.5)
\pscircle[fillcolor=lightgray, fillstyle=solid, linewidth=1pt](1;22.5){.2}
\pscircle[fillcolor=lightgray, fillstyle=solid, linewidth=1pt](1;67.5){.2}
\pscircle[fillcolor=lightgray, fillstyle=solid, linewidth=1pt](1;112.5){.2}
\pscircle[fillcolor=lightgray, fillstyle=solid, linewidth=1pt](1;157.5){.2}
\pscircle[fillcolor=lightgray, fillstyle=solid, linewidth=1pt](1;202.5){.2}
\pscircle[fillcolor=lightgray, fillstyle=solid, linewidth=1pt](1;247.5){.2}
\pscircle[fillcolor=lightgray, fillstyle=solid, linewidth=1pt](1;292.5){.2}
\pscircle[fillcolor=lightgray, fillstyle=solid, linewidth=1pt](1;337.5){.2}
\rput(1;22.5){{\footnotesize{7}}}\rput(1;67.5){{{\footnotesize{6}}}}\rput(1;112.5){{{\footnotesize{5}}}}
\rput(1;157.5){{\footnotesize{4}}}\rput(1;202.5){{\footnotesize{3}}}\rput(1;247.5){{\footnotesize{2}}}
\rput(1;292.5){{\footnotesize{1}}}\rput(1;337.5){{\footnotesize{0}}}
\end{pspicture}$ & $1331$   &
$\{0,1,2,3,5,6,7\}$  & $\begin{pspicture}[shift=-1](-1.4,-1.3)(1.4,1.3)
\psline(1;22.5)(1;67.5)(1;112.5)(1;157.5)(1;202.5)(1;247.5)(1;292.5)(1;337.5)(1;22.5)
\psline(1;22.5)(1;112.5)(1;202.5)(1;292.5)(1;22.5)
\psline(1;67.5)(1;157.5)(1;247.5)(1;337.5)(1;67.5)
\psline(1;22.5)(1;157.5)(1;292.5)(1;67.5)(1;202.5)(1;337.5)(1;112.5)(1;247.5)(1;22.5)
\pscircle[fillcolor=lightgray, fillstyle=solid, linewidth=1pt](1;22.5){.2}
\pscircle[fillcolor=lightgray, fillstyle=solid, linewidth=1pt](1;67.5){.2}
\pscircle[fillcolor=lightgray, fillstyle=solid, linewidth=1pt](1;112.5){.2}
\pscircle[fillcolor=lightgray, fillstyle=solid, linewidth=1pt](1;157.5){.2}
\pscircle[fillcolor=lightgray, fillstyle=solid, linewidth=1pt](1;202.5){.2}
\pscircle[fillcolor=lightgray, fillstyle=solid, linewidth=1pt](1;247.5){.2}
\pscircle[fillcolor=lightgray, fillstyle=solid, linewidth=1pt](1;292.5){.2}
\pscircle[fillcolor=lightgray, fillstyle=solid, linewidth=1pt](1;337.5){.2}
\rput(1;22.5){{\footnotesize{7}}}\rput(1;67.5){{{\footnotesize{6}}}}\rput(1;112.5){{{\footnotesize{5}}}}
\rput(1;157.5){{\footnotesize{4}}}\rput(1;202.5){{\footnotesize{3}}}\rput(1;247.5){{\footnotesize{2}}}
\rput(1;292.5){{\footnotesize{1}}}\rput(1;337.5){{\footnotesize{0}}}
\end{pspicture}$ &  $1112111$  \\
\hline  $\{0,3,4,5\}$  &  $\begin{pspicture}[shift=-1](-1.4,-1.3)(1.4,1.3)
\psline(1;22.5)(1;157.5)(1;292.5)(1;67.5)(1;202.5)(1;337.5)(1;112.5)(1;247.5)(1;22.5)
\psline(1;22.5)(1;202.5)\psline(1;67.5)(1;247.5)\psline(1;112.5)(1;292.5)\psline(1;157.5)(1;337.5)
\pscircle[fillcolor=lightgray, fillstyle=solid, linewidth=1pt](1;22.5){.2}
\pscircle[fillcolor=lightgray, fillstyle=solid, linewidth=1pt](1;67.5){.2}
\pscircle[fillcolor=lightgray, fillstyle=solid, linewidth=1pt](1;112.5){.2}
\pscircle[fillcolor=lightgray, fillstyle=solid, linewidth=1pt](1;157.5){.2}
\pscircle[fillcolor=lightgray, fillstyle=solid, linewidth=1pt](1;202.5){.2}
\pscircle[fillcolor=lightgray, fillstyle=solid, linewidth=1pt](1;247.5){.2}
\pscircle[fillcolor=lightgray, fillstyle=solid, linewidth=1pt](1;292.5){.2}
\pscircle[fillcolor=lightgray, fillstyle=solid, linewidth=1pt](1;337.5){.2}
\rput(1;22.5){{\footnotesize{7}}}\rput(1;67.5){{{\footnotesize{6}}}}\rput(1;112.5){{{\footnotesize{5}}}}
\rput(1;157.5){{\footnotesize{4}}}\rput(1;202.5){{\footnotesize{3}}}\rput(1;247.5){{\footnotesize{2}}}
\rput(1;292.5){{\footnotesize{1}}}\rput(1;337.5){{\footnotesize{0}}}
\end{pspicture}$ & $3113$   &
$\{0,1,2,3,4,5,6,7\}$ & $\begin{pspicture}[shift=-1](-1.4,-1.3)(1.4,1.3)
\psline(1;22.5)(1;67.5)(1;112.5)(1;157.5)(1;202.5)(1;247.5)(1;292.5)(1;337.5)(1;22.5)
\psline(1;22.5)(1;112.5)(1;202.5)(1;292.5)(1;22.5)
\psline(1;67.5)(1;157.5)(1;247.5)(1;337.5)(1;67.5)
\psline(1;22.5)(1;157.5)(1;292.5)(1;67.5)(1;202.5)(1;337.5)(1;112.5)(1;247.5)(1;22.5)
\psline(1;22.5)(1;202.5)\psline(1;67.5)(1;247.5)\psline(1;112.5)(1;292.5)\psline(1;157.5)(1;337.5)
\pscircle[fillcolor=lightgray, fillstyle=solid, linewidth=1pt](1;22.5){.2}
\pscircle[fillcolor=lightgray, fillstyle=solid, linewidth=1pt](1;67.5){.2}
\pscircle[fillcolor=lightgray, fillstyle=solid, linewidth=1pt](1;112.5){.2}
\pscircle[fillcolor=lightgray, fillstyle=solid, linewidth=1pt](1;157.5){.2}
\pscircle[fillcolor=lightgray, fillstyle=solid, linewidth=1pt](1;202.5){.2}
\pscircle[fillcolor=lightgray, fillstyle=solid, linewidth=1pt](1;247.5){.2}
\pscircle[fillcolor=lightgray, fillstyle=solid, linewidth=1pt](1;292.5){.2}
\pscircle[fillcolor=lightgray, fillstyle=solid, linewidth=1pt](1;337.5){.2}
\rput(1;22.5){{\footnotesize{7}}}\rput(1;67.5){{{\footnotesize{6}}}}\rput(1;112.5){{{\footnotesize{5}}}}
\rput(1;157.5){{\footnotesize{4}}}\rput(1;202.5){{\footnotesize{3}}}\rput(1;247.5){{\footnotesize{2}}}
\rput(1;292.5){{\footnotesize{1}}}\rput(1;337.5){{\footnotesize{0}}}
\end{pspicture}$ &  $(1)^8$  \\
\hline
\end{tabular}
\vskip .2cm
\caption{ The circulant graphs of order $8$ and the corresponding palindromes} \label{pal8exafig}
\end{table}
There are four disconnected circulant graphs out of sixteen circulant graphs of order $8$, where their
generating sets are $\{0\}$, $\{0,4\}$, $\{0,2,6\}$ and $\{0,2,4,6\}$, see Table \ref{pal8exafig}. On the other hand, there are four periodic
palindromes $(4)^2$, $(2)^4$, $(121)^2$ and $(1)^8$. One can see that the bijection in Theorem~\ref{palthm} does not preserve
the connectedness of the circulant graph $G(n, \Omega)$
to the aperiodicity of the corresponding palindromes $\sigma_{\Omega}$ as we have seen in Example~\ref{aperiodic8exa}.\\

For each integer $n\geq 2$, we now define
\[P_A(n)=\{\sigma \in  P(n) \mid \sigma \mbox{~is~aperiodic}\}
\mbox{~and~} \mathcal{G}(n)=\{\Omega \in \mathcal{G}_P(n) \mid  \langle \Omega \rangle =\BZ_n \}.\]

In the following lemma, we will show
how $\gcd(\sigma_{\Omega})$ classifies the connectedness of the corresponding circulant graphs
$G(n, \Omega)$, and the result will be used to prove Theorem~\ref{apn-thm}.

\begin{lem}\label{generating-lem}
For each integer $n\geq 2$, the following are equivalent.
\begin{enumerate}
\item[(i)] The circulant graph $G(n, \Omega)$ is connected.
\item[(ii)] $\Omega\in \mathcal{G}(n)$.
\item[(iii)] $\gcd (\sigma_{\Omega})=1$.
\end{enumerate}
\end{lem}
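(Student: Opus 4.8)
The plan is to separate the chain into the two equivalences (ii) $\Leftrightarrow$ (iii), which is purely number-theoretic and essentially a repeat of the computation in Theorem \ref{dicirculantthm}, and (i) $\Leftrightarrow$ (ii), which is the standard connectivity criterion for a Cayley graph of $\BZ_n$. Combining the two then yields the full equivalence.

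First I would dispose of (ii) $\Leftrightarrow$ (iii). Write $\Omega=\{a_1,\ldots,a_t\}$ with $0=a_1<a_2<\cdots<a_t$. The subgroup $\langle\Omega\rangle$ of $\BZ_n$ generated by $a_1,\ldots,a_t$ equals $d\,\BZ_n$, where $d=\gcd\{a_2,\ldots,a_t,n\}$, so $\langle\Omega\rangle=\BZ_n$ precisely when $d=1$. On the other hand $\sigma_{\Omega}=\omega_1\cdots\omega_t$ has entries $\omega_i=a_{i+1}-a_i$ for $i<t$ and $\omega_t=n-a_t$; since $a_1=0$, each $a_i$ is a partial sum of the $\omega_j$ and $n=\sum_j\omega_j$, so the Euclidean-algorithm identity gives
$$\gcd(\sigma_{\Omega})=\gcd\{a_2-a_1,\ldots,a_t-a_{t-1},n-a_t\}=\gcd\{a_2,\ldots,a_t,n\}=d.$$
Hence $\gcd(\sigma_{\Omega})=1$ iff $d=1$ iff $\Omega\in\mathcal{G}(n)$, establishing (ii) $\Leftrightarrow$ (iii).

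Next I would prove (i) $\Leftrightarrow$ (ii). The key observation is that the set of vertices of $G(n,\Omega)$ reachable from $0$ by a walk is exactly $\langle\Omega\rangle$. Indeed, every edge of $G(n,\Omega)$ joins $v$ to $v\pm s$ for some $s\in\Omega$ (using $\Omega=\Omega^{-1}$), so any walk starting at $0$ remains inside $\langle\Omega\rangle$; conversely every element of $\langle\Omega\rangle$ is an integer combination of elements of $\Omega$ and is therefore reached by concatenating the corresponding single-generator steps, the availability of both $+s$ and $-s$ moves again coming from $\Omega$ being symmetric. Since translation by $1$ is an automorphism of $G(n,\Omega)$, the graph is vertex-transitive, so all connected components have the same cardinality $|\langle\Omega\rangle|$. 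Thus $G(n,\Omega)$ is connected iff the component of $0$ is all of $\BZ_n$, i.e. iff $\langle\Omega\rangle=\BZ_n$, which is (ii).

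I expect no serious obstacle, as both halves are classical. The only points requiring care are verifying that the reachable set is exactly the subgroup $\langle\Omega\rangle$ — where the symmetry $\Omega=\Omega^{-1}$ is used for the nontrivial inclusion — and being explicit that the hypothesis $a_1=0$ is what forces the gcd of consecutive differences to coincide with $\gcd\{a_2,\ldots,a_t,n\}$. Stringing together (i) $\Leftrightarrow$ (ii) and (ii) $\Leftrightarrow$ (iii) completes the proof.
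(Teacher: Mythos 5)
Your proof is correct and follows essentially the same route as the paper, which simply asserts that $\Omega$ generates $\BZ_n$ iff $\gcd(\Omega)=1$ and that $\gcd(\Omega)=\gcd(\sigma_{\Omega})$; you have merely supplied the details (the telescoping gcd computation and the vertex-transitivity argument for (i) $\Leftrightarrow$ (ii)) that the paper leaves implicit.
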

\begin{proof}
Note that for each $\Omega \in \mathcal{G}_P(n)$, $\Omega$ is a
generating set for group $\BZ_n$ if and only if $\gcd (\Omega)=1$. By $\gcd (\Omega)=\gcd (\sigma_{\Omega})$, the result now follows immediately.
\end{proof}

To define a map between $P_{A}(n)$ and $\mathcal{G}(n)$ in Theorem \ref{apn-thm}, we need to define the following notation.\\
For any periodic palindrome of the form $\sigma=\overbrace{(c_1\cdots c_k)\cdots
(c_1\cdots c_k)}^\text{$r$ times of ($c_1\cdots c_k$)}$ ($i.e.$, $r$ times repetition of aperiodic palindrome 
$c_1\cdots c_k$), we denote it by $\sigma=(c_1\cdots c_k)^r$. For any aperiodic palindromes of the form $\sigma'=\sigma_1\sigma_2\cdots \sigma_m$, if $\gcd(\sigma')=d\neq 1$ then we define a periodic composition $\nu(\sigma'):= (\frac{\sigma_1}{d} \frac{\sigma_2}{d} \cdots \frac{\sigma_m}{d})^d$.\\

In Example~\ref{aperiodic8exa}, among twelve aperiodic palindromes
of $8$, two decompositions $242$, $8$ have $\gcd(242)=2$ and $\gcd(8)=8$ and thus $\nu(242)=(121)^2$ and $\nu(8)=(1)^8$. \\

\begin{thm} \label{apn-thm}
For each integer $n\geq 2$, there is a one-to-one correspondence between the set of aperiodic palindromes of $n$ and the set of connected circulant graphs of order $n$.
\end{thm}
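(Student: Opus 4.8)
The plan is to reduce everything to a single explicit bijection on palindromes. By Lemma~\ref{generating-lem}, the map $\Omega\mapsto G(n,\Omega)$ identifies $\mathcal{G}(n)$ with the connected circulant graphs of order $n$, and $\Omega\in\mathcal{G}(n)$ holds exactly when $\gcd(\sigma_\Omega)=1$. Hence the bijection $\psi|_{\mathcal{G}_P(n)}$ of Theorem~\ref{palthm} restricts to a bijection from $\mathcal{G}(n)$ onto $Q(n):=\{\sigma\in P(n)\mid\gcd(\sigma)=1\}$, the set of palindromes of $n$ with $\gcd$ one. It therefore suffices to exhibit a bijection $\Phi\colon P_A(n)\to Q(n)$; composing $\Phi$ with $(\psi|_{\mathcal{G}_P(n)})^{-1}$ and with $\Omega\mapsto G(n,\Omega)$ then yields the required correspondence.

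First I would record the structural fact that every periodic palindrome $\rho$ is a power $\rho=\tau^r$ of a unique primitive (aperiodic) composition $\tau$ with $r\ge 2$, and that this $\tau$ is automatically a palindrome: from $\rho=\rho^{-1}$ and $\rho^{-1}=(\tau^{-1})^r$, uniqueness of the primitive root forces $\tau=\tau^{-1}$. Moreover $\gcd(\tau)=\gcd(\rho)$, since the $\gcd$ of a repetition equals that of the repeated block, and multiplying every part of a composition by a fixed positive integer preserves both palindromicity and aperiodicity. Using this I would split $P_A(n)=A_1\sqcup A_{\ne}$ into the aperiodic palindromes with $\gcd=1$ and those with $\gcd\ne 1$, and $Q(n)=Q_A\sqcup Q_P$ into the $\gcd$-one palindromes that are aperiodic and those that are periodic; note that $A_1=Q_A$ as sets. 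Then I define $\Phi$ to be the identity on $A_1$ and $\Phi=\nu$ on $A_{\ne}$.

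The heart of the argument is that $\nu$ restricts to a bijection $A_{\ne}\to Q_P$. For $\sigma\in A_{\ne}$ with $\gcd(\sigma)=d\ge 2$ (so $d\mid n$, as $d$ divides every part), the block $\tfrac1d\sigma$ is an aperiodic $\gcd$-one palindrome of $n/d$, whence $\nu(\sigma)=(\tfrac1d\sigma)^d$ is a periodic $\gcd$-one palindrome of $n$, i.e.\ lies in $Q_P$; here $\tfrac1d\sigma$ is precisely its primitive root and $d$ its number of repetitions. Conversely, given $\rho\in Q_P$ with primitive root $\tau$ and $\rho=\tau^r$, the composition $r\tau$ (each part multiplied by $r$) is an aperiodic palindrome of $n$ with $\gcd(r\tau)=r\ge 2$, and $\nu(r\tau)=\tau^r=\rho$. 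Thus $\rho\mapsto r\tau$ is a two-sided inverse of $\nu|_{A_{\ne}}$, so $\nu|_{A_{\ne}}$ is a bijection onto $Q_P$. Since $\Phi$ is the identity from $A_1$ onto $Q_A$ and equals $\nu$ on the complementary pieces, $\Phi$ is a bijection, which completes the proof.

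The main obstacle is exactly the mismatch isolated in Example~\ref{aperiodic8exa}: connectedness of $G(n,\Omega)$ matches $\gcd(\sigma_\Omega)=1$, \emph{not} aperiodicity of $\sigma_\Omega$, so one cannot simply transport aperiodic palindromes across $\psi$. The delicate step is establishing that $\nu$ correctly trades the ``extra'' aperiodic-but-$\gcd\ne1$ palindromes for the ``missing'' periodic-but-$\gcd$-one ones; this rests on uniqueness of the palindromic primitive root and on the bookkeeping that multiplying parts by $\gcd$ inverts $\nu$ while preserving the sum $n$, palindromicity, and aperiodicity. As a consistency check, Möbius inversion of $2^{\lfloor n/2\rfloor}=\sum_{d\mid n}|P_A(d)|=\sum_{d\mid n}|Q(d)|$ gives $|P_A(n)|=|Q(n)|=\sum_{d\mid n}\mu(n/d)2^{\lfloor d/2\rfloor}$, which equals $12$ for $n=8$, in agreement with Table~\ref{pal8exafig}.
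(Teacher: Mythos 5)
Your argument is correct and is essentially the paper's own proof: the paper's map $\tau:P_A(n)\to\mathcal{G}(n)$ is exactly your $\Phi$ composed with $(\psi|_{\mathcal{G}_P(n)})^{-1}$, using the same Lemma~\ref{generating-lem} and the same inverse construction $(\sigma_1\cdots\sigma_m)^r\mapsto(r\sigma_1)(r\sigma_2)\cdots(r\sigma_m)$ in the surjectivity step. Your explicit check that the primitive root of a periodic palindrome is itself a palindrome fills in a detail the paper leaves implicit.
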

\begin{proof}
Define a map $\tau : P_A(n) \longrightarrow \mathcal{G}(n)$ by
\begin{equation}\label{def-2ndftn} \nonumber
\tau(\sigma)= \begin{cases}
(\psi|_{\mathcal{G}_P(n)})^{-1}(\sigma)=\Omega_{\sigma} & {\rm{if}}~\gcd(\sigma)=1\\
(\psi|_{\mathcal{G}_P(n)})^{-1}(\nu(\sigma))=\Omega_{\nu(\sigma)} & {\rm{if}}~\gcd(\sigma)=d\neq 1
\end{cases}
\end{equation}
where $\nu(\sigma)= (\frac{\sigma_1}{d} \frac{\sigma_2}{d} \cdots \frac{\sigma_m}{d})^d$ and $\psi|_{\mathcal{G}_P(n)}$ is the bijective map defined in Theorem \ref{palthm}. Then the map $\tau$ is well-defined as $\tau(\sigma)\in \mathcal{G}_P(n)$ by Theorem \ref{palthm} and $ \langle \Omega \rangle =\BZ_n$ by Lemma~\ref{generating-lem} for $\gcd(\nu(\sigma))=1$. As the map $\tau$ is injective by Theorem \ref{palthm}, to prove the result, it is enough to show that $\tau$ is surjective. Take an element $\Omega=\{a_1, a_2, \ldots, a_t\} \in \mathcal{G}(n)$. If $\psi|_{\mathcal{G}_P(n)}(\Omega) \in P_A(n)$ then it is easy to see that $\tau(\psi|_{\mathcal{G}_P(n)}(\Omega))=\Omega$. Now suppose $\psi|_{\mathcal{G}_P(n)}(\Omega) \not\in P_A(n)$. Then $\psi|_{\mathcal{G}_P(n)}(\Omega)$ is a periodic palindrome and $\gcd(\psi|_{\mathcal{G}_P(n)}(\Omega))=1$ by Lemma \ref{generating-lem}. Thus put $\psi|_{\mathcal{G}_P(n)}(\Omega)=(\sigma_1 \sigma_2 \cdots \sigma_m)^r$ for some $m\geq 1$ and $r \ge 2$, where palindrome $\sigma_1 \sigma_2 \cdots \sigma_m$ is aperiodic and $\gcd(\psi|_{\mathcal{G}_P(n)}(\Omega))=\gcd \{\sigma_1, \sigma_2, \ldots, \sigma_m\}=1$. Then palindrome $(r\sigma_1) (r\sigma_2) \cdots (r\sigma_m)$ satisfies $(r\sigma_1) (r\sigma_2) \cdots (r\sigma_m)\in P_{A}(n)$ and $\tau( (r\sigma_1) (r\sigma_2) \cdots (r\sigma_m))=\Omega_{\nu((r\sigma_1) (r\sigma_2) \cdots (r\sigma_m))}
=\Omega_{(\sigma_1 \sigma_2 \cdots \sigma_m)^r}=\Omega$, and thus the map $\tau$ is surjective. This completes the proof.
\end{proof}

By combining Theorem~\ref{apn-thm} and \cite[Corollary~3.2]{KKL}, the following corollary follows immediately.

\begin{cor} \label{acounting}
For each integer $n\geq 2$, the number of aperiodic palindromes of $n$ is
$$\sum_{d|n} \mu\left(\dfrac{n}{d}\right) \left(2^{\lfloor \frac{d}{2} \rfloor}
-1 \right)$$
where $\mu$ is the M\"obius function.

\end{cor}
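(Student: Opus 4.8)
The plan is to give a self-contained derivation by M\"obius inversion, paralleling Corollary~\ref{ancounting}, rather than quoting \cite[Corollary~3.2]{KKL} (which together with Theorem~\ref{apn-thm} would give the result in one line). Write $f(n)=|P(n)|$ and $g(n)=|P_A(n)|$. First I would establish the recurrence
\begin{equation}\label{palrec}
f(n)=\sum_{e\mid n} g(e).
\end{equation}
To see this, recall that every palindrome $\sigma$ of $n$ is a power $\sigma=\pi^{r}$ ($r\ge 1$) of a unique aperiodic composition $\pi$, its primitive root, and that $\pi$ then sums to $e:=n/r$ with $e\mid n$; conversely, for each $e\mid n$ and each aperiodic palindrome $\pi$ of $e$, the word $\pi^{n/e}$ is a palindrome of $n$. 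Counting the palindromes of $n$ according to the value $e$ attached to their primitive root yields \eqref{palrec}.

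The one genuinely non-routine point, and what I expect to be the main obstacle, is to verify that the primitive root of a palindrome is itself a palindrome; only then does $\pi$ lie in $P_A(e)$ and not merely in the larger set of aperiodic compositions of $e$. Here I would argue directly with indices: let $\sigma=\sigma_1\cdots\sigma_m$ be a palindrome of minimal period $k$, so that $m=rk$ and $\sigma_j=\sigma_{j+k}$ for $1\le j\le m-k$, and I would prove $\sigma_i=\sigma_{k+1-i}$ for all $1\le i\le k$. Applying the reversal symmetry $\sigma_j=\sigma_{m+1-j}$ with $j=k+1-i$ and then iterating periodicity gives $\sigma_{k+1-i}=\sigma_{m-k+i}=\sigma_{(r-1)k+i}=\sigma_i$, which is exactly the palindrome condition on the block $\pi=\sigma_1\cdots\sigma_k$. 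The remaining ingredients---that $\pi^{n/e}$ is again a palindrome (its reverse equals the same repetition of $\pi$) and that the primitive root is unique---are standard facts about words and furnish the inverse correspondence, so \eqref{palrec} follows.

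With \eqref{palrec} established, the conclusion is purely formal. M\"obius inversion of \eqref{palrec} gives
\begin{equation}\label{palmob}
g(n)=\sum_{d\mid n}\mu\!\left(\tfrac{n}{d}\right) f(d)=\sum_{d\mid n}\mu\!\left(\tfrac{n}{d}\right) 2^{\lfloor d/2\rfloor},
\end{equation}
where the second equality substitutes $f(d)=2^{\lfloor d/2\rfloor}$ from Corollary~\ref{counting} (this also holds at $d=1$, since $f(1)=1=2^{0}$). For $n\ge 2$ one has $\sum_{d\mid n}\mu(n/d)=0$, so subtracting this vanishing sum term by term rewrites \eqref{palmob} in the asserted form $\sum_{d\mid n}\mu(\tfrac{n}{d})\bigl(2^{\lfloor d/2\rfloor}-1\bigr)$. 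Finally, Theorem~\ref{apn-thm} identifies $P_A(n)$ with the connected circulant graphs of order $n$, so the same number enumerates those graphs as well. The only subtlety is the case $n=1$, where \eqref{palrec} forces $g(1)=1$ while the closed form evaluates to $0$; this is precisely why the statement is restricted to $n\ge 2$.
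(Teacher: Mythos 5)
Your proof is correct, but it takes a genuinely different route from the paper. The paper disposes of this corollary in one line: it combines Theorem~\ref{apn-thm} with the enumeration of connected circulant graphs of order $n$ quoted from \cite[Corollary~3.2]{KKL}, so all the counting is outsourced to that external reference. You instead work entirely on the palindrome side: you prove the recurrence $|P(n)|=\sum_{e\mid n}|P_A(e)|$ by decomposing each palindrome as a power of its primitive root, invert it, substitute $|P(d)|=2^{\lfloor d/2\rfloor}$ from Corollary~\ref{counting}, and subtract the vanishing sum $\sum_{d\mid n}\mu(n/d)$ to reach the stated form. The one nontrivial ingredient --- that the primitive root of a palindrome is again a palindrome --- you identify correctly and your index computation $\sigma_{k+1-i}=\sigma_{m-k+i}=\sigma_i$ settles it. What your approach buys is self-containment (no dependence on the unpublished \cite{KKL} preprint) and a pleasing symmetry with the paper's own proof of Corollary~\ref{ancounting}, which uses exactly the same M\"obius-inversion template for prime compositions; indeed your argument proves the formula for aperiodic palindromes directly, and then Theorem~\ref{apn-thm} is needed only if one wants to transfer the count to connected circulant graphs, which is the reverse of the paper's logical direction. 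The only caveat is that your argument implicitly uses the standard uniqueness of the primitive root of a word (so that the lengths multiply correctly, $m=rk$); you flag this as a standard fact, which is acceptable, but a fully self-contained write-up would include a sentence justifying it.
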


\begin{remark}
One may find the number of aperiodic palindromes of $n$ with $n \leq 127$ in~\cite{baek}. This result improves the previously known result $n\le 55$ in \cite{OEIS}.
\end{remark}

\end{document}